\newcommand{\vertiii}[1]{{\left\vert\kern-0.25ex\left\vert\kern-0.25ex\left\vert #1 
    \right\vert\kern-0.25ex\right\vert\kern-0.25ex\right\vert}}
\DeclareMathAlphabet{\mathdutchcal}{U}{dutchcal}{m}{n}
\SetMathAlphabet{\mathdutchcal}{bold}{U}{dutchcal}{b}{n}
\DeclareMathAlphabet{\mathdutchbcal}{U}{dutchcal}{b}{n}
\theoremstyle{thmstyleone}%
\theoremstyle{thmstyletwo}%
\newtheorem{remark}{Remark}%
\newtheorem{lemma}{Lemma}%
\theoremstyle{thmstylethree}%
\newtheorem{definition}{Definition}%
\newmdtheoremenv{theo}{Theorem}
\newcommand{\order}{\varrho}
\DeclareMathOperator*{\esssup}{ess\,sup}
\begin{document}

% \title[Error Estimates for Gradient Interpolation and Vector-Field Interpolation on Protected Delaunay Meshes in $\mathbb{R}$\lowercase{$^d$}]{Error Estimates for Gradient Interpolation and Vector-Field Interpolation on Protected Delaunay Meshes in  $\mathbb{R}$\lowercase{$^d$}}

%\title[Protected Delaunay Meshes and Error Estimates for Elliptic Problems in $\mathbb{R}$\lowercase{$^d$}]{Protected Delaunay Meshes and Error Estimates for Elliptic Problems in $\mathbb{R}$\lowercase{$^d$}}

\title[Error Estimates for the Interpolation and Approximation of Gradients and Vector Fields on Protected Delaunay Meshes in $\mathbb{R}$\lowercase{$^d$}]{Error Estimates for the Interpolation and Approximation of Gradients and Vector Fields on Protected Delaunay Meshes in $\mathbb{R}$\lowercase{$^d$}}

%%=============================================================%%
%% GivenName	-> \fnm{Joergen W.}
%% Particle	-> \spfx{van der} -> surname prefix
%% FamilyName	-> \sur{Ploeg}
%% Suffix	-> \sfx{IV}
%% \author*[1,2]{\fnm{Joergen W.} \spfx{van der} \sur{Ploeg} 
%%  \sfx{IV}}\email{iauthor@gmail.com}
%%=============================================================%%

\author*[1]{\fnm{David M.} \sur{Williams}}\email{david.m.williams@psu.edu}

\author[2]{\fnm{Mathijs} \sur{Wintraecken}}\email{mathijs.wintraecken@inria.fr}
%\equalcont{These authors contributed equally to this work.}

\affil*[1]{\orgdiv{Department of Mechanical Engineering}, \orgname{Pennsylvania State University}, \orgaddress{\city{University Park}, \postcode{16802}, \state{Pennsylvania}, \country{United States}}}

\affil[2]{\orgdiv{Centre Inria d'Université Côte d'Azur}, \orgaddress{\city{Méditerranée}, \postcode{06902}, \state{Sophia Antipolis}, \country{France}}}

%\affil[3]{\orgdiv{Department}, \orgname{Organization}, \orgaddress{\street{Street}, \city{City}, \postcode{610101}, \state{State}, \country{Country}}}

%%==================================%%
%% Sample for unstructured abstract %%
%%==================================%%

\abstract{One frequently needs to interpolate or approximate gradients on simplicial meshes. Unfortunately, there are very few explicit mathematical results governing the interpolation or approximation of vector-valued functions on Delaunay meshes in more than two dimensions. Most of the existing results are tailored towards interpolation with piecewise linear polynomials. In contrast, interpolation with piecewise high-order polynomials is not well understood. In particular, the results in this area are sometimes difficult to immediately interpret, or to specialize to the Delaunay setting. In order to address this issue, we derive explicit error estimates for high-order, piecewise polynomial gradient interpolation and approximation on \emph{protected} Delaunay meshes.  In addition, we generalize our analysis beyond gradients, and obtain error estimates for sufficiently-smooth vector fields. 
Throughout the paper, we show that the quality of interpolation and approximation often depends (in part) on the minimum thickness of simplices in the mesh. Fortunately, the minimum thickness can be precisely controlled on protected Delaunay meshes in $\mathbb{R}^d$. }
%Furthermore, the current best mathematical estimates for minimum thickness have been obtained on such meshes.}

\keywords{protected Delaunay, interpolation, high-order, higher-dimensions, error estimates}

%%\pacs[JEL Classification]{D8, H51}

%%\pacs[MSC Classification]{35A01, 65L10, 65L12, 65L20, 65L70}

\maketitle

\section{Introduction}

The primary purpose of this article is to motivate the construction of \emph{protected} Delaunay meshes in higher dimensions, $d>2$. In order to avoid confusion, we will refer to traditional Delaunay meshes which satisfy an empty-sphere criterion as \emph{standard} Delaunay meshes. In contrast, a \emph{protected} Delaunay mesh satisfies a modified empty-sphere criterion, where the sphere of each simplex is augmented by a spherical-buffer region. The original sphere with radius $R$, is effectively replaced by an augmented sphere with radius $R + r$. Here, we insist that $r \geq \delta$, for some $\delta > 0$. In this context, the quantity $\delta$ is called the \emph{protection}. Broadly speaking, our goal is to make $\delta$ as large as possible, as this has two positive ramifications: (i) it increases the minimum size of slivers in the Delaunay mesh, and (ii) it reduces the sensitivity of the Delaunay mesh to the locations of its points~\cite{boissonnat2014delaunay}. Here, we define a \emph{sliver} element, as an element which has a very small ratio of its minimum altitude to its maximum edge length. In this work, we are primarily interested in slivers, as we can directly reduce the errors of gradient (or vector-field) interpolation and approximation by increasing the minimum size of slivers. Of course, in a \emph{standard} Delaunay mesh without protection, the thickness of a sliver can become arbitrarily close to zero, (for $d > 2$). Fortunately, in cases where the protection is non-zero, we obtain fatter simplices whose thickness is bounded away from zero. This fact was established in the pioneering work of Boissonnat, Dyer, and Ghosh~\cite{boissonnat2013stability}. More precisely, they showed that slivers naturally arise from pathological configurations of $d+2$ vertices which are nearly co-spherical. These pathological configurations can be avoided by carefully perturbing the mesh points to facilitate the construction of a protected Delaunay mesh. A procedure for perturbing the points has been proposed in~\cite{boissonnat2014delaunay}. In addition, a detailed summary of protected Delaunay meshes appears in~\cite{boissonnat2018geometric}. 

The relationship between standard Delaunay meshes and protected Delaunay meshes is illustrated in Figure~\ref{fig:venn}.
\begin{figure}[h!]
    \centering
    \includegraphics[width = 0.5\textwidth]{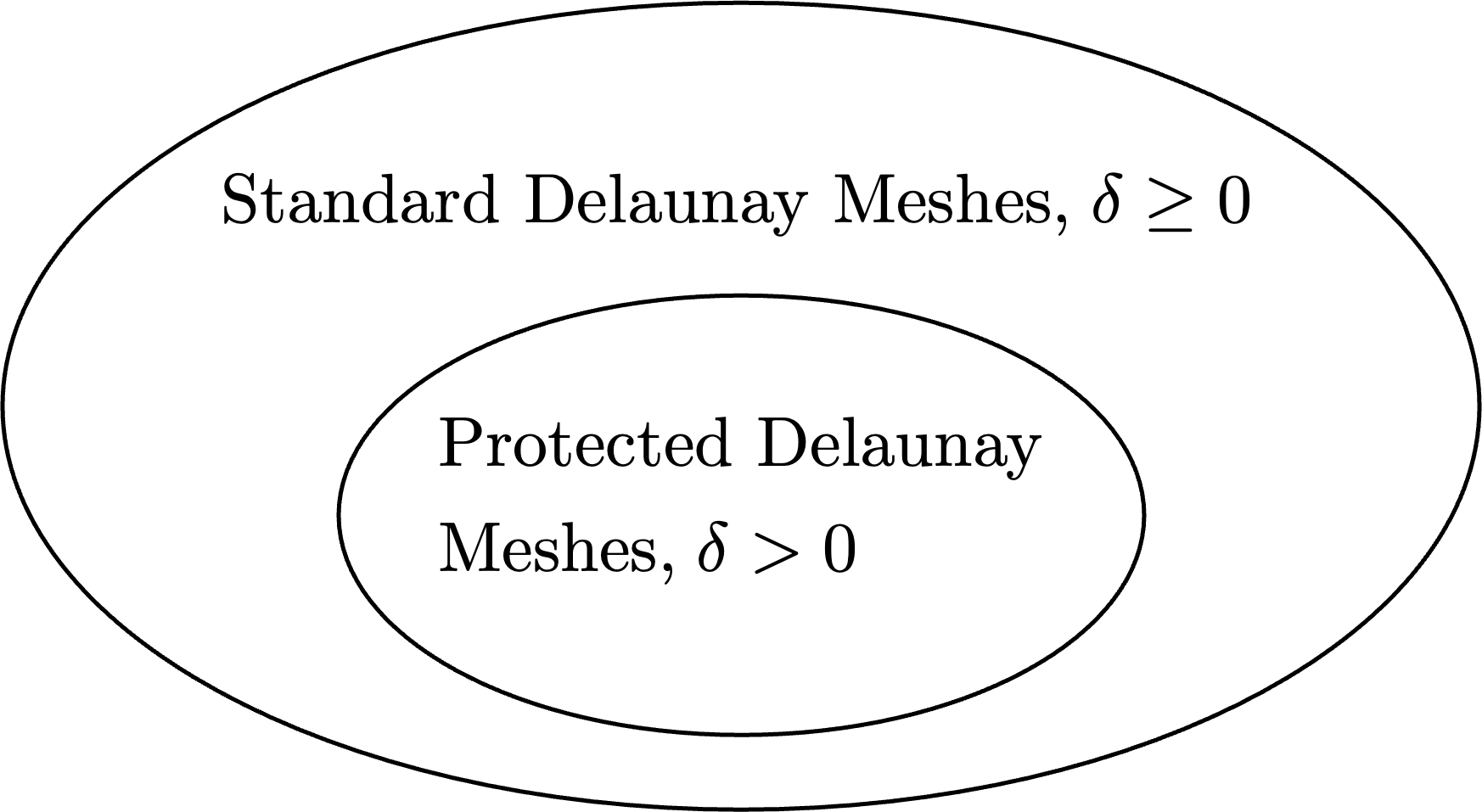}
    \caption{A diagram of the relationship between standard Delaunay meshes and protected Delaunay meshes. The protection parameter is denoted by $\delta$.}
    \label{fig:venn}
\end{figure}
Evidently, only a strict subset of all standard Delaunay meshes can be classified as protected Delaunay meshes.

%{\color{blue} I am not completely certain about this, but I would have gone from general to specific.}

Now, having established the concept of a protected Delaunay mesh, let us expand our discussion in order to present some broader perspectives on the entire field of Delaunay meshing. Generally speaking, standard Delaunay meshes have a very good reputation among scientists and engineers. This positive reputation has been documented in many places, including the excellent textbooks of Cheng, Dey, and Shewchuk~\cite{cheng2013delaunay}, and Borouchaki and George~\cite{borouchaki2017meshing}. However, in our opinion, the reputation of these meshes is mostly based on their optimality properties in $\mathbb{R}^2$. Unfortunately, there is a weaker justification for using standard Delaunay meshes in higher dimensions. In particular, Delaunay triangulations of nets (well-spaced point sets) still contain slivers in dimensions higher than $2$, while in 2D the quality of simplices in a standard  Delaunay triangulation of a net is lower bounded (as mentioned previously).  

In the next section, we will provide a short review of important properties of standard Delaunay meshes, and identify areas for potential improvement.  

\subsection{Background: Properties of Standard Delaunay Meshes}

%{\color{blue} I would split into two subsubsection one about 2D the other about dD.}

\subsubsection{The Two-Dimensional Case}

There are many important results which govern the shape and quality of triangles in standard Delaunay meshes in $\mathbb{R}^2$. For example, Sibson~\cite{sibson1978locally} proved that a standard Delaunay triangulation of a point set $S$ is guaranteed to maximize the minimum-interior angle of its triangles, relative to any other triangulation of the same points. In addition, Musin~\cite{musin1993delaunay,musin1997properties} proved that the standard Delaunay triangulation of $S$ minimizes the average circumradius of triangles in the triangulation. In a similar fashion, Lambert~\cite{lambert1994delaunay} proved that the standard Delaunay triangulation of $S$ maximizes the average inradius. Furthermore, Musin~\cite{musin1995index} proved that the standard Delaunay triangulation minimizes the harmonic index functional, where the functional is the sum over each triangle of the squared edge lengths divided by the triangle area. In more recent work, Musin~\cite{musin2010optimality} conjectured that the mean radius functional and the $D$ functional are minimized on standard Delaunay triangulations. Here, the mean radius functional is an area-weighted sum of the squared circumradii, and the $D$ functional is an area-weighted sum of the squared distance between the barycenter and circumcenter of each triangle. Following this work, Edelsbrunner et al.~\cite{edelsbrunner2017voronoi} proved that the mean radius functional is minimized on standard Delaunay triangulations. In addition, they showed that the Voronoi functional is maximized on the same triangulations. For the sake of brevity, the precise formulation of the Voronoi functional will not be described here; the interested reader is encouraged to consult~\cite{akopyan2009extremal} for details. Furthermore, a detailed summary of the optimality properties of standard Delaunay triangulations appears in Sierra's thesis~\cite{sierra2021optimality}. 

Most of the results above, govern the shape-regularity of triangles in a standard Delaunay triangulation. Essentially, these results ensure that the triangles resemble equilateral triangles, as much as possible. 

One may also obtain results which directly predict the interpolation or approximation accuracy of a standard Delaunay triangulation. In particular, Rippa~\cite{rippa1990minimal} and Powar~\cite{powar1992minimal} proved that the piecewise linear interpolations of $H^1$ functions have minimal \emph{roughness} on a standard Delaunay triangulation. Here, the roughness of the linear interpolation is defined as the integral of the squared magnitude of the gradient. This quantity naturally arises when a classical finite element method is applied to elliptic problems in $\mathbb{R}^2$. In~\cite{rippa1990minimum}, Rippa and Schiff leveraged the roughness result of~\cite{rippa1990minimal}, and proved that standard Delaunay triangulations minimize the solution error for simple elliptic problems. Thereafter, Shewchuk~\cite{shewchuk2002} performed an exhaustive study of  piecewise linear interpolation on generic triangular and tetrahedral meshes. Here, Shewchuk presented techniques for improving interpolation error on meshes that are not necessarily Delaunay.

\subsubsection{The $d$-Dimensional Case}

There are fewer mathematical results for standard Delaunay meshes in $\mathbb{R}^{d}$ when $d > 2$. In~\cite{rajan94optimality}, Rajan proved that the  standard Delaunay mesh minimizes a functional of the weighted sum of the squared edge lengths, (see Theorem 1 of~\cite{rajan94optimality}). In addition, Rajan proved that the standard Delaunay mesh minimizes the maximum, min-containment radius of simplices in the mesh, (see Theorem 2 of~\cite{rajan94optimality}). These results are somewhat abstract, but they can easily be clarified with appropriate examples. In particular, Rajan's functional in $\mathbb{R}^4$ is the sum over each 4-simplex of the 4-volume-weighted squared edge lengths of the simplex. Furthermore, the min-containment radius of a 4-simplex corresponds to the 3-sphere of minimum radius which contains the simplex. 

Following the work of Rajan, Waldron~\cite{waldron1998error} proved that a standard Delaunay mesh minimizes the infinity error for the piecewise linear interpolation of a $C^1$ function with pointwise-bounded second derivatives, (see Theorem 3.1 of~\cite{waldron1998error}). We note that Waldron's work implicitly leverages the min-containment radius result of Rajan~\cite{rajan94optimality}. In particular, Waldron shows that the infinity error of the linear interpolation of a $C^1$ function over a single simplex is bounded above by the min-containment radius of the simplex, multiplied by an $L^{\infty}$-norm of the function's second derivatives. Evidently, standard Delaunay meshes minimize the maximum min-containment radius for all simplices in the entire mesh, and therefore, provide an optimal upper bound for the pointwise, piecewise linear interpolation error.

In addition, one can prove that the standard Delaunay mesh provides optimal piecewise linear interpolation of the quadratic function, $\left|\bm{x} \right|^{2} + \bm{a}\cdot \bm{x} + b$, where $\bm{x}$ is a generic point in $\mathbb{R}^d$, $\bm{a}\in\mathbb{R}^d$, and $b \in \mathbb{R}$~\cite{cheng2013delaunay}. For this function, the standard Delaunay mesh minimizes the error in the $L_p$-norm for $p \geq 1$. This result has been leveraged in order to construct objective functions for \emph{optimal Delaunay triangulations}, (ODTs)---see the pioneering work of Chen and Xu~\cite{chen2004optimal}. In particular, ODTs are defined based on an energy functional or objective function which takes a mesh as input. Let  $f: \bm{x} \rightarrow |\bm{x}|^2$ be the parabola. 
A given mesh $\mathcal{T}_h$ with length-scale $h$, induces a piecewise linear interpolation $f_{\mathrm{pl}}$, which coincides with $f$ on the vertices of $\mathcal{T}_h$ and is a linear interpolation on each simplex. The energy functional (objective function) is now defined as the integrated error that the PL-interpolation makes, that is $\mathcal{F}_{\textrm{ODT}} (\mathcal{T}_h) = \left\|f - f_{\mathrm{pl}}\right\|_{L^{1}(\Omega)}^{2}$. An optimal Delaunay triangulation is a mesh that minimizes this functional within the class of meshes with the same number of vertices. We stress that this means that both the positions of the vertices and the combinatorics of the mesh are not fixed in this optimization.
%DW: Thanks for clarifying the matter!
However, because for a fixed set of vertices the Delaunay triangulation minimizes $\mathcal{F}_{\textrm{ODT}} (\mathcal{T}_h)$, the result is always a Delaunay triangulation (assuming the vertices are in general position).  
From our perspective, the only issues with the ODT approach are, (a) the lack of theoretical guarantees on the minimum size of slivers, (b) the inherent focus on piecewise linear interpolation, and (c) the potential  presence of combinatorial instabilities.     

Finally, we note that Musin~\cite{musin1997properties} proved that a parabolic functional is minimized on standard Delaunay meshes in $\mathbb{R}^d$. This functional consists of a volume-weighted sum over each simplex of the squared vertex locations. 
%While this is an interesting result, we are presently unaware of how it can be used in practical applications. 

\subsection{Summary of Existing Literature and New Contributions}

Some of the most important work on the interpolation and approximation properties of standard Delaunay meshes in $\mathbb{R}^d$ appears to be that of Rajan~\cite{rajan94optimality}, Waldron~\cite{waldron1998error}, Chen, Xu, and coworkers~\cite{chen2004optimal,chen2011efficient}, and Musin~\cite{musin1997properties}. In particular, the work of Waldron~\cite{waldron1998error} guarantees that standard Delaunay meshes minimize the pointwise, piecewise linear interpolation error of $C^1$ functions with pointwise-bounded second derivatives. Unfortunately, this work is incomplete, as it does not apply to interpolation or approximation with high-order, piecewise polynomial functions.

Of course, there are many general results on high-order, piecewise polynomial interpolation and approximation on simplicial meshes---see for example the excellent finite element textbooks of Ern and Guermond~\cite{ern2004theory,ern2021finiteI,ern2021finiteII,ern2021finiteIII}, Brenner and Scott~\cite{brenner2008mathematical}, and Di Pietro and Ern~\cite{di2011mathematical}. However, these results often contain constants which depend on the mesh properties in an implicit or unspecified fashion. In addition, even the more precise results have not been specifically adapted to the context of protected Delaunay meshes in $\mathbb{R}^d$. 

In the current paper, we provide a new set of explicit results, which establish error estimates for gradient interpolation and approximation, as well as vector-field interpolation, on protected Delaunay meshes in $\mathbb{R}^d$. Our results are simultaneously more general and more specific than previous results: they are more general than previous results for standard Delaunay meshes which focused on piecewise linear interpolation, and they are more specific than previous high-order finite-element-based results which (in most cases) were not explicitly written with Delaunay meshes in mind.

%Furthermore, we observe that (by definition) the gradient of a sufficiently-smooth scalar function is a vector field. With this in mind, we extend our results which govern gradient interpolation in order to establish error estimates for vector-field interpolation on protected Delaunay meshes. 

\subsection{Alternatives}
For the sake of completeness, we note that protected Delaunay meshes are not the only avenue for addressing slivers. There are sliver-removal techniques, such as those discussed in~\cite{cheng2000sliver,edelsbrunner2000smoothing,li2003generating,cheng2003quality,tournois2009perturbing,boissonnat2014manifold} and the references therein. However, in what remains of this work, we will primarily focus on protected Delaunay meshes, due to their elegant and concrete mathematical structure. This structure will directly facilitate our error estimates.

\subsection{Paper Approach and Outline}

In this work, we proceed in a straightforward fashion, and leverage vector calculus in conjunction with classical interpolation theory in order to construct error estimates. Briefly, our approach involves extending the notion of roughness, originally introduced by Rippa~\cite{rippa1990minimal} for gradients in $\mathbb{R}^2$, to the case of $\mathbb{R}^d$. Thereafter, we use this notion, along with similar ideas, to obtain a set of error estimates which govern gradient  (and vector-field) interpolation and approximation on protected Delaunay meshes. 

The format of the paper is as follows. In Section~\ref{prelim_section}, we motivate the present work by introducing a canonical, elliptic problem in $\mathbb{R}^4$. In addition, we expand the definition of roughness, originally introduced by Rippa~\cite{rippa1990minimal}, into higher dimensions. In Section~\ref{meshing_section}, we discuss the fundamental characteristics of meshes and point sets. In Sections~\ref{gradient_interpolation_section} and \ref{gradient_approximation_section}, we present new explicit error estimates that govern the interpolation and approximation of gradients on protected Delaunay meshes in $\mathbb{R}^d$.  In Section~\ref{vector_interpolation_section}, we extend the results of Section~\ref{gradient_interpolation_section}, in order to establish error estimates for vector fields. Finally, in Section~\ref{conclusion_section}, we present some concluding remarks.

\section{Mathematical Preliminaries} \label{prelim_section}

In this section, we define some important notation. Thereafter, we introduce an example problem in $\mathbb{R}^4$. Next, we define the notion of roughness for this problem, and explain the relationship between roughness and the solution error. 
%Thereafter, we provide a precise definition for roughness in any number of dimensions.
Finally, we outline our strategy for constructing error estimates.

\subsection{Notation}
Consider a simply-connected, polytopal domain $\Omega \subset \mathbb{R}^d$. On this domain, we can define the Lebesgue space $L_{\order}(\Omega)$ and its associated norm $\left\| \cdot \right\|_{L_{\order}(\Omega)}$ with $1\leq \order < \infty$ as follows
\begin{align*}
    L_{\order}(\Omega) &= \left\{f \, \Bigg| \, \int_{\Omega} \left|f\right|^{\order} \, dx_1 dx_2 \cdots dx_d < \infty  \right\}, \\[1.0ex] \left\| f \right\|_{L_{\order}(\Omega)} &= \left[\int_{\Omega} \left|f\right|^{\order} \, dx_1 dx_2 \cdots dx_d \right]^{1/\order},
\end{align*}
where $f = f(\bm{x})= f(x_1, x_2, \ldots, x_d) \in \mathbb{R}$ is a generic scalar function. For vector-valued functions, the Lebesgue space is associated with the following norm
\begin{align*}
    \left\| \bm{f} \right\|_{L_{\order}(\Omega)} = \left[\int_{\Omega} \sum_{i=1}^{d} \left|f_i \right|^{\order} \, dx_1 dx_2 \cdots dx_d\right]^{1/\order},   
\end{align*}
where $\bm{f} = \bm{f}(\bm{x})  \in \mathbb{R}^{d}$ is a generic vector-valued function. In a similar fashion, for matrix-valued functions, we can define the following norm
\begin{align*}
    \left\| \bm{F} \right\|_{L_{\order}(\Omega)} = \left[\int_{\Omega} \sum_{i=1}^{d} \sum_{j=1}^{d} \left|F_{ij} \right|^{\order} \, dx_1 dx_2 \cdots dx_d\right]^{1/\order},   
\end{align*}
where $\bm{F} = \bm{F}(\bm{x})\in \mathbb{R}^{d\times d}$ is a generic matrix-valued function.

For the case of $\order = \infty$, we have that
\begin{align*}
    L_{\infty}(\Omega) &= \left\{f \, \Bigg| \, \esssup_{\bm{x}\in \Omega} \left| f(\bm{x}) \right| < \infty \right\}, \\[1.0ex]
    \left\| f \right\|_{L_{\infty}(\Omega)} &= \esssup_{\bm{x} \in \Omega} \left| f(\bm{x}) \right|, \qquad
    \left\| \bm{f} \right\|_{L_{\infty}(\Omega)} = \esssup_{\bm{x} \in \Omega} \left(  \sup_{i =1, \ldots, d} \left| f_i(\bm{x}) \right| \right), \\[1.0ex]
    \left\| \bm{F} \right\|_{L_{\infty}(\Omega)} &= \esssup_{\bm{x}\in \Omega} \left( \sup_{i,j=1,\ldots, d} \left| F_{ij}(\bm{x}) \right| \right).
\end{align*}
We note that alternative infinity-norm definitions, such as
\begin{align*}
\vertiii{\bm{f}}_{L_{\infty}(\Omega)} = \esssup_{\bm{x} \in \Omega}  \left( \left\| \bm{f}(\bm{x}) \right\|_{\ell^2} \right),
\end{align*}
where $\left\| \cdot \right\|_{\ell^2}$ is the Euclidean norm on $\bm{f}$, are equivalent to the definitions above, as
\begin{align*}
    \left\| \bm{f} \right\|_{L_{\infty}(\Omega)} \leq \vertiii{\bm{f}}_{L_{\infty}(\Omega)} \leq \sqrt{d}  \left\| \bm{f} \right\|_{L_{\infty}(\Omega)}.
\end{align*}
Next, we can define the following Sobolev spaces
\begin{align*}
    H^{1}(\Omega) &= \left\{f \in L_{2}(\Omega) \, |  D^{\bm{\alpha}} f \in L_{2}(\Omega), |\bm{\alpha}| \leq 1 \right\}, \\[1.0ex] H^{1}_{0}(\Omega) &= \left\{f \in H^{1}(\Omega) \, | \, f = 0 \quad \mathrm{on} \quad \partial \Omega \right\},
\end{align*}
where
\begin{align*}
    D^{\bm{\alpha}} f &= \frac{\partial^{|\bm{\alpha}|}f}{\partial x^{\alpha_1} \partial x^{\alpha_2} \cdots \partial x^{\alpha_d}}, \qquad \bm{\alpha} = \left(\alpha_1, \alpha_2, \ldots, \alpha_d\right) \in  \mathbb{N}^d, \qquad |\bm{\alpha}| = \alpha_1 + \alpha_2 + \cdots + \alpha_d.
\end{align*}
Evidently, the associated norms are
\begin{align*}
    \left\| f \right\|_{H^{1}(\Omega)} &= \left[\int_{\Omega} \left( f^2 + \nabla f \cdot \nabla f \right) dx_1 dx_2 \cdots dx_d  \right]^{1/2}, \\[1.0ex] \left\| f \right\|_{H^{1}_{0}(\Omega)} &= \left[\int_{\Omega} \left( \nabla f \cdot \nabla f \right) dx_1 dx_2 \cdots dx_d  \right]^{1/2},
\end{align*}
where $\nabla = \left(\frac{\partial}{\partial x_1}, \frac{\partial}{\partial x_2}, \ldots, \frac{\partial}{\partial x_d}  \right)$.

Finally, we can define the general space $W^{1,\order}(\Omega)$ and the associated norms
\begin{align*}
    \left\| f \right\|_{W^{1,\order}(\Omega)} &= \left[\int_{\Omega}\left( |f|^{\order} + \sum_{i=1}^{d} \left|\frac{\partial f}{\partial x_i} \right|^{\order} \right) dx_1 dx_2 \cdots dx_d\right]^{1/\order}, \\[1.0ex]
    \left\| \bm{f} \right\|_{W^{1,\order}(\Omega)} &= \left[\int_{\Omega}\left( \sum_{i=1}^{d} |f_i|^{\order} + \sum_{i=1}^{d} \sum_{j=1}^{d} \left|\frac{\partial f_i}{\partial x_j} \right|^{\order} \right) dx_1 dx_2 \cdots dx_d\right]^{1/\order},
    \\[1.0ex]
    \left\| \bm{F} \right\|_{W^{1,\order}(\Omega)} &= \left[\int_{\Omega}\left( \sum_{i=1}^{d} \sum_{j=1}^{d} |F_{ij}|^{\order} + \sum_{i=1}^{d} \sum_{j=1}^{d} \sum_{k=1}^{d} \left|\frac{\partial F_{ij}}{\partial x_k} \right|^{\order} \right) dx_1 dx_2 \cdots dx_d\right]^{1/\order},
\end{align*}
where $1\leq \order < \infty$. Evidently, norms on the space $W^{2,\order}(\Omega)$ are defined in an analogous fashion, with zeroth-, first-, and second-derivative terms.

\subsection{Elliptic Problem in 4D} \label{elliptic_example}

Now, consider a polytopal domain $\Omega \subset \mathbb{R}^4$. We are interested in solving the following elliptic problem on $\Omega$
\begin{align}
    -\nabla^{2} u &= f, \qquad \text{in} \quad \Omega, \label{elliptic_strong} \\
   \nonumber u &= 0, \qquad \text{on} \quad \partial \Omega,
\end{align}
where $u$ is the twice-differentiable solution that vanishes on the boundary $\partial \Omega$, $f$ is a forcing function in $L_{2}(\Omega)$, and $\nabla^{2} = \nabla \cdot \nabla \left( \cdot \right)$ is the four-dimensional Laplacian. The four-dimensional gradient operator is given by $\nabla = \left(\frac{\partial}{\partial x}, \frac{\partial}{\partial y}, \frac{\partial}{\partial z}, \frac{\partial}{\partial w} \right)$.

We can formulate a finite element method for solving Eq.~\eqref{elliptic_strong} by replacing $u$ with $u_h$, multiplying by the test function $v_h$, integrating over the domain $\Omega$, and integrating by parts as follows
\begin{align*}
    &\int_{\Omega} \nabla u_h \cdot \nabla v_h \, dV - \int_{\partial \Omega} v_h \frac{\partial u_h}{\partial x} n_x \, dy dz dw - \int_{\partial \Omega} v_h \frac{\partial u_h}{\partial y} n_y \, dx dz dw \\[1.0ex]
    &- \int_{\partial \Omega} v_h \frac{\partial u_h}{\partial z} n_z \, dx dy dw - \int_{\partial \Omega} v_h \frac{\partial u_h}{\partial w} n_w \, dx dy dz = \int_{\Omega}  f v_h \, dV,
\end{align*}
where $u_h, v_h \in H^{1}(\Omega)$ and $dV = dx dy dz dw$. Here, $u_h$ and $v_h$ are usually taken to be piecewise-polynomial functions. The subscript $h$'s on the functions indicate that they are defined on a mesh, with a characteristic length scale $h$.

Next, boundary conditions can be enforced by choosing $u_h, v_h \in V_h \subset H^{1}_{0}(\Omega)$ so that
\begin{align*}
    \int_{\Omega} \nabla u_h \cdot \nabla v_h \, dV  = \int_{\Omega} f v_h \, dV.
\end{align*}
We can rewrite the expression above in terms of more familiar notation
\begin{align}
    a_h(u_h,v_h) = L_h(v_h), \label{elliptic_fem}
\end{align}
where 
\begin{align}
    \label{bilinear_form}
    a_h(u_h,v_h) &\equiv \int_{\Omega} \nabla u_h \cdot \nabla v_h \, dV
    \\
    \nonumber &= \int_{\Omega} \left( \frac{\partial u_h}{\partial x}  \frac{\partial v_h}{\partial x} + \frac{\partial u_h}{\partial y}  \frac{\partial v_h}{\partial y} + \frac{\partial u_h}{\partial z}  \frac{\partial v_h}{\partial z} + \frac{\partial u_h}{\partial w}  \frac{\partial v_h}{\partial w} \right) dx dy dz dw, \\[1.0ex]
    \nonumber L_h(v_h) &\equiv \int_{\Omega} f v_h \, dV.
\end{align}
It is well known that Eq.~\eqref{elliptic_fem} has a unique solution when $a_h$ is symmetric, bilinear, and governed by the following constraints
\begin{align}
    | a_h(u_h,v_h) | \leq \sigma \left\| u_h \right\| \left\| v_h \right\|, \qquad \tau \left\| v_h \right\|^2 \leq a_h(v_h,v_h),
    \label{coercive_upperbound}
\end{align}
where $\sigma$ and $\tau$ are positive constants, and 
\begin{align*}
    \left\|v_h \right\| = \left\|v_h \right\|_{H^{1}(\Omega)} &\equiv \left[ \int_{\Omega} \left(v_{h}^{2} + \nabla v_h \cdot \nabla v_h \right) dV \right]^{1/2} \\[1.0ex]
    &= \left[ \int_{\Omega} \left(v_{h}^{2} + \left(\frac{\partial v_h}{\partial x} \right)^{2} + \left(\frac{\partial v_h}{\partial y} \right)^{2} + \left(\frac{\partial v_h}{\partial z} \right)^{2} + \left(\frac{\partial v_h}{\partial w} \right)^{2} \right) dV \right]^{1/2}.
\end{align*}
Next, we can define the energy norm
\begin{align*}
    \left\| v_h \right\|_{a} \equiv \sqrt{a_h(v_h,v_h)} = \left\| \nabla v_h \right\|_{L_2(\Omega)}.
\end{align*}
In accordance with standard elliptical theory, we can introduce the following energy functional
\begin{align}
    J(v_h) \equiv a_h(v_h,v_h) - 2 L_h(v_h). \label{jdef}
\end{align}
Here, $J(v_h)$ can be interpreted as the potential energy associated with a membrane, and $v_h$ can be interpreted as the transverse deflection of the membrane (see~\cite{odenNotes2025}). In addition, $a_h(v_h,v_h)$  characterizes the potential energy associated with internal forces, and $L_h(v_h)$  characterizes the energy associated with a force per unit area, $f$, applied to the membrane.

It can be shown that the minimum of the functional $J(v_h)$ is the solution of Eq.~\eqref{elliptic_fem}, (see~\cite{sayas2019variational}, Lemma 2.2). More precisely
\begin{align*}
    J(u_h) = \min_{v_h \in V_h} J(v_h).
\end{align*}
Now, we are ready to introduce the notion of roughness, and its relationship to solution error. In particular, suppose that we create a pair of meshes, $\mathcal{T}_1$ and $\mathcal{T}_2$, for our domain $\Omega$. These meshes are distinct, and do not necessarily possess the same number of simplices. However, we assume that these meshes both tessellate (i.e.~triangulate) the same set of points and the same domain. Due to the bilinearity and symmetry of $a_h$, the following equality holds
\begin{align*}
    \left\|u - u_{h,1} \right\|^{2}_{a} = J(u_{h,1}) - J(u_{h,2}) + \left\|u - u_{h,2} \right\|^{2}_{a}.
\end{align*}
Naturally, if we choose $\mathcal{T}_1$ such that
\begin{align*}
    J(u_{h,1}) \leq J(u_{h,2}),
\end{align*}
then it follows that
\begin{align*}
    \left\|u - u_{h,1} \right\|_{a} \leq \left\|u - u_{h,2} \right\|_{a}.
\end{align*}
Therefore, we seek to minimize $J(u_h)$ in order to minimize the error as measured by the energy norm $\left\| \cdot \right\|_{a}$. In turn, due to the definition of $J(u_h)$ (see Eq.~\eqref{jdef}), we seek meshes $\mathcal{T}_1$ that minimize the quantity $a_h(u_h,u_h)$. This quantity is often called the \emph{roughness} of the mesh. 
%Evidently, due to the roughness's direct correlation with the solution error, we seek to minimize it.

\subsection{Roughness in \texorpdfstring{$\mathbb{R}^d$}{Rd}}

It turns out that the measure of roughness that we identified in the previous section can be extended to any number of dimensions $d$. In particular, one may define
\begin{align}
    a(v,v) \equiv \int_{\Omega} \sum_{m=1}^{d} \left(\frac{\partial v}{\partial x_{m}}\right)^{2} \, dx_{1} dx_{2} \cdots dx_{d} =  \left\| \nabla v \right\|_{L_2(\Omega)}^{2},
\end{align}
as the measure of roughness in $\mathbb{R}^d$. Here, we have omitted the subscript $h$ from the quantities $a$ and $v$ in order to simplify the notation. We note that the roughness functional (above) is merely the square of the norm on $H^{1}_{0}(\Omega)$, introduced previously. 

\subsection{Alternative Strategy}

In the previous sections, the primary objective was to minimize the roughness, and thereby minimize the error of the elliptic problem. Unfortunately, directly minimizing the roughness is difficult in higher dimensions, $d>2$. In this higher-dimensional setting, it is easier to minimize the distance between the exact gradient and the approximate gradient. More precisely, we seek to minimize
\begin{align}
        \left\| \nabla u - \mathcal{I}(\nabla u) \right\|, \label{grad_interp_error}
\end{align}
and in a similar fashion
\begin{align}
    \left\| \nabla u - \nabla u_h \right\|, \label{grad_eval_error}
\end{align}
where $\| \cdot \|$ is a suitable norm over the domain $\Omega$, and $\mathcal{I}(\cdot)$ is an interpolation operator that depends on the mesh. The error in Eq.~\eqref{grad_interp_error} is the \emph{gradient interpolation error}, i.e.~the error associated with interpolating the gradient of $u$, denoted by $\mathcal{I}(\nabla u)$. Conversely, the error in Eq.~\eqref{grad_eval_error} is the \emph{gradient approximation error}, i.e.~the error associated with approximating the gradient of $u_h$, denoted by $\nabla u_h$. With this distinction in mind, we will construct the following upper bounds on the gradient interpolation error on a mesh $\mathcal{T}_h$
\begin{align}
    \label{bound_one}
    \left\| \nabla u - \mathcal{I}(\nabla u) \right\|_{L_{\lambda}(\Omega)} &\leq C \mathdutchcal{h}^r \left\| \nabla(\nabla u) \right\|_{L_{\lambda}(\Omega)}, 
    \\[1.0ex]
    \left\| \nabla u - \mathcal{I}(\nabla u) \right\|_{L_{2}(\Omega)} &\leq C \tilde{C}(\mathcal{T}_h)  \mathdutchcal{h}^r \left\| \nabla(\nabla u) \right\|_{L_{2\order}(\Omega)}, \label{bound_two}
\end{align}
where $C$ is a generic constant that is independent of the mesh, $\tilde{C}(\mathcal{T}_h)$ is a constant that depends on the mesh, $\mathdutchcal{h}$ is a characteristic length scale associated with the mesh, $r \in \mathbb{R}^{+}$, $1\leq \lambda \leq \infty$, $1 < \order \leq \infty$, and $\nabla(\nabla u)$ is the Hessian of $u$. Evidently, these bounds only apply in cases where the exact solution $u$ is sufficiently smooth. In addition, we note that $r$ should not be confused with the radius which appeared in the Introduction.

In addition, we will construct upper bounds on the gradient approximation error
\begin{align}
    \left\| \nabla (u - u_h) \right\|_{L_2(\Omega)} &\leq C \tilde{C}(\mathcal{T}_h) \left\| \nabla u \right\|_{L_2(\Omega)}, \label{bound_four} 
    \\[1.0ex]
    \left\| \nabla (u - u_h) \right\|_{L_2(\Omega)} &\leq C \tilde{C}(\mathcal{T}_h)  \mathdutchcal{h}^r \left\| \nabla(\nabla u) \right\|_{L_{2}(\Omega)}. \label{bound_six}
\end{align}
It is very important to note that all of the inequalities in Eqs.~\eqref{bound_one}--\eqref{bound_six}, are \emph{dependent} on the mesh via the constants  $\tilde{C}(\mathcal{T}_h)$ or the mesh spacing $\mathdutchcal{h}$. This suggests that some meshes may be better than others. In fact, we will show that protected Delaunay meshes allow for precise control over $\tilde{C}(\mathcal{T}_h)$ and (to some extent)  $\mathdutchcal{h}$. 
%Furthermore, these constants can often be minimized on a protected Delaunay mesh.
%In this sense, protected Delaunay meshes facilitate smaller upper bounds on the gradient error. 

\section{Fundamentals of Meshing} \label{meshing_section}

\subsection{General Properties}

Let us introduce a finite set of points $S$ 
\begin{align*}
    S = \left\{\bm{p}_{1}, \bm{p}_{2}, \ldots, \bm{p}_{l}, \ldots, \bm{p}_{N_{v}} \right\}, \qquad 1\leq  l \leq N_v,
\end{align*}
where the cardinality $|S| = N_v$, and each point $\bm{p}_{l}$ is contained within a bounded, simply-connected domain $\Omega \in \mathbb{R}^{d}$. For the sake of simplicity, we let $\mathrm{conv}(S) = \Omega$. Furthermore, we assume that $S$ is a $(\varepsilon, \overline{\eta})$-net, which satisfies the following conditions:
\begin{align}
    \forall \bm{x} \in \Omega, \exists \bm{p}_{l} \in S&: \qquad \left| \bm{x} - \bm{p}_{l} \right| \leq \varepsilon, \label{density_condition} \\[1.0ex]
    \forall \bm{p}_{l}, \bm{p}_{n} \in S&: \qquad \left| \bm{p}_{l} - \bm{p}_{n} \right| \geq \eta, \label{separation_condition}
\end{align}
where $\varepsilon > 0$, $\eta > 0$, and $\overline{\eta} = \eta/\varepsilon$. Together, Eqs.~\eqref{density_condition} and \eqref{separation_condition} control the density of the points in $S$.

Now, let us introduce the generic mesh $\mathcal{T}_h$ whose vertices are the points of $S$, and whose elements $K$ are non-overlapping.  We assume that the mesh is a \emph{triangulation} of the domain. The word \emph{triangulation} can have slightly different connotations in different fields, so let us be more precise. We assume that each element $K$ is a $d$-simplex equipped with $(d+1)$-facets of dimension $d-1$. In addition, we assume that each facet is either a boundary facet (with precisely one $d$-simplex neighbor) or an interior facet (with precisely two $d$-simplex neighbors). As a result, the mesh does not contain any hanging nodes. In accordance with these assumptions (above), we say that the mesh is a \emph{manifold}: i.e.~a pure simplicial $d$-complex which is $d$-connected, homeomorphic to the underlying space, and for which each $(d-1)$-simplex has exactly one or two $d$-simplex neighbors. 

The $d+1$ vertices of each simplex $K$ are denoted by $\bm{p}_{K,i}$, where $i = 1, \ldots, d+1$. The coordinates of each vertex are given by
\begin{align*}
    \bm{p}_{K,i} = \left(p_{K,i}^{1}, \, p_{K,i}^{2}, \ldots, \, p_{K,i}^{m}, \ldots, \, p_{K,i}^{d}\right)^{T},
\end{align*}
where $1 \leq m \leq d$. The absolute value of a vertex can be defined in terms of the absolute values of its components, as follows
\begin{align*}
    \mathrm{abs}(\bm{p}_{K,i}) = \left(\left|p_{K,i}^{1}\right|, \, \left|p_{K,i}^{2}\right|, \ldots, \, \left|p_{K,i}^{m}\right|, \ldots, \, \left|p_{K,i}^{d}\right|\right)^{T}.
\end{align*}
Note that $|\bm{p}_{K,i}|$ and $\mathrm{abs}(\bm{p}_{K,i})$ are not equivalent quantities, as the former is the scalar magnitude of a $d$-vector, and the latter is a $d$-vector of absolute values. 

Lastly, we can define the $d(d+1)/2$ edges of each simplex $K$ as follows
\begin{align*}
    \bm{p}_{K,ij} &= \bm{p}_{K,j} - \bm{p}_{K,i} =\left(p_{K,ij}^{1}, \, p_{K,ij}^{2}, \ldots, \, p_{K,ij}^{m}, \ldots, \, p_{K,ij}^{d}\right)^{T},
\end{align*}
where $1\leq i \leq d+1$, $1\leq j \leq i-1$, and $1 \leq m \leq d$.

\subsection{Sizing Parameters}

Let us denote the diameter of $K$ by $\Delta(K)$, where the diameter is the longest edge length of $K$. In addition, we can define the maximum diameter of an element in the entire mesh $\mathcal{T}_h$, as follows
\begin{align*}
    h \equiv \max_{K \in \mathcal{T}_h} \Delta(K).
\end{align*}
Furthermore, we can define the ratio of the maximum to the minimum element diameters
\begin{align}
    C_{\Delta} \equiv \frac{\max_{K \in \mathcal{T}_h} \Delta(K)}{\min_{K \in \mathcal{T}_h} \Delta(K)} = \frac{h}{\min_{K \in \mathcal{T}_h} \Delta(K)}.
    \label{mesh_regular}
\end{align}
Next, we can introduce the radius of the min-containment ball for the element $K$, which is denoted by $R_{K,\min}$. Generally speaking, we have that
\begin{align}
    \Delta(K) \leq 2 R_{K,\min}.
     \label{radius_bound}
\end{align}
We can define the maximum radius of the min-containment ball as follows
\begin{align}
    R_{\max} \equiv \max_{K \in \mathcal{T}_h} R_{K,\min}.
     \label{radius_max}
\end{align}
Next, in accordance with~\cite{boissonnat2012stability}, we can define the thickness of a $d$-simplex $K$ 
\begin{align}
         \Xi(K) \equiv \begin{cases}
             1 & \text{if} \; d = 0 \\
             \min_{s} \left( \frac{\mathrm{dist}\left(\bm{p}_{K,s}, \mathrm{aff}(\mathcal{F}_{K,s})\right)}{d \, \Delta(K)} \right) & \text{otherwise}
         \end{cases}, \label{thickness_def}
\end{align}
where $1 \leq s \leq d+1$, and the function $\mathrm{dist}(\cdot,\cdot)$ returns the shortest distance between the vertex $\bm{p}_{K,s}$ and the affine hull of its opposite facet, $\mathrm{aff}(\mathcal{F}_{K,s})$. The minimum thickness can be denoted by
\begin{align}
    C_{\Xi} \equiv \min_{K \in \mathcal{T}_h} (\Xi(K)). 
    \label{min_thickness}
\end{align}
Naturally, the minimum thickness will be small for meshes which contain sliver elements, and will be large in the absence of slivers. 

We can also define the following mesh-regularity parameter
\begin{align}
    \sigma(K) = \frac{\Delta(K)}{\rho(K)}, \label{sigma_partial}
\end{align}
where $\rho(K)$ is the diameter of the insphere of the element $K$. In addition, we can denote the maximum value of this parameter as
\begin{align}
    C_{\sigma} \equiv \max_{K \in \mathcal{T}_h} (\sigma(K)). \label{max_regularity}
\end{align}

Lastly, we introduce two functionals which act on the entire mesh. 

\vspace{10pt}

\begin{definition}[Rajan's Functional] This functional was originally identified by Rajan~\cite{rajan94optimality}. It is the `weighted-sum of edge lengths squared'
\begin{align}
        \Theta \equiv  \sum_{K \in \mathcal{T}_{h}} (\Theta(K)),
        \label{rajans_functional}
\end{align}
where
\begin{align}
    \Theta(K) \equiv  
        \sum_{i=1}^{d+1} \sum_{j=1}^{i-1}\left(\bm{p}_{K,ij}\cdot \bm{p}_{K,ij} \right) |K|,
    \label{rajans_functional_element}
\end{align}
and $|K|$ is the $d$-volume of the element $K$. Rajan's version of the functional is obtained by rescaling the result by a factor of 
\begin{align*}
    \frac{1}{(d+1)(d+2)}.
\end{align*}
\label{rajans_functional_exclam}
\end{definition}

\vspace{10pt}

\begin{definition}[Max Edge-Length Functional] The following functional arises naturally during the subsequent analysis
\begin{align}
    C_{\Upsilon} \equiv \max_{K \in \mathcal{T}_h} \left( \Upsilon(K)\right),
    \label{local_constant}
\end{align}
where
\begin{align}
    \Upsilon(K) =  \frac{1}{\Delta(K)} \sqrt{\sum_{i=1}^{d+1} \sum_{j=1}^{i-1}\left(\bm{p}_{K,ij}\cdot \bm{p}_{K,ij} \right) }.
\end{align}
This functional is the maximum root-mean-square (RMS) sum of the squared edge lengths, normalized by the element diameter. 
\end{definition}

\subsection{Delaunay Properties}

The following statements hold true for a standard Delaunay (or protected Delaunay) mesh:

\begin{enumerate}
    \item The maximum min-containment ball radius, $R_{\max}$ (see Eq.~\eqref{radius_max}) is minimized on a standard Delaunay mesh. This statement was proven in Theorem 2 of~\cite{rajan94optimality}.\item Rajan's functional $\Theta$  (see Eq.~\eqref{rajans_functional}) is minimized on a standard Delaunay mesh. This statement was proven in Theorem 1 of~\cite{rajan94optimality}.
    \item The minimum thickness $C_{\Xi}$ (see Eq.~\eqref{min_thickness}) is lower bounded on a protected Delaunay mesh.
    \item The mesh regularity parameter $C_{\sigma}$ (see Eq.~\eqref{max_regularity}) is upper bounded on a protected Delaunay mesh. 
\end{enumerate}

The latter two statements require additional elaboration (see below).

\vspace{10pt}

\begin{remark}[Protected Delaunay Meshes and Thickness]
     Broadly speaking, the minimum element thickness is difficult to control in $\mathbb{R}^d$, for arbitrary $d$. Fortunately, in Lemma 5.27 of~\cite{boissonnat2018geometric}, Boissonnat and coworkers proved that the minimum thickness of simplices is bounded below on a protected Delaunay mesh, as follows
     \begin{align}
         C_{\Xi} \geq \frac{\delta^2}{8 d \varepsilon^2}. \label{thickness_bound}
     \end{align}
     Therefore, the minimum element thickness depends quadratically on the protection, $\delta$.
     \label{protected_delaunay_remark}
\end{remark}

\vspace{10pt}

\begin{remark}[Protected Delaunay Meshes and Regularity] The mesh regularity parameter $C_{\sigma}$ is inversely proportional to the minimum thickness $C_{\Xi}$.  In order to see this, one must derive an inverse relationship between $\sigma(K)$ and 
 $\Xi(K)$. Towards this end, we introduce the following well-known identity\footnote{This identity can be deduced as follows. The barycenter is a distance of $1/(d+1)$ times the smallest altitude from the closest face. This holds due to the properties of the barycentric coordinates. One may then observe that the resulting distance is a lower bound for the radius of the insphere.} involving the diameter of the insphere of the element $K$ 
\begin{align}
    \frac{\rho(K)}{2} \geq \frac{1}{d+1} \min_{s} \left( \mathrm{dist}\left(\bm{p}_{K,s}, \mathrm{aff}(\mathcal{F}_{K,s})\right) \right).
    \label{ratio_relate_one}
\end{align}
Upon dividing both sides of Eq.~\eqref{ratio_relate_one} by $\Delta(K)$, one obtains
\begin{align*}
    \frac{\rho(K)}{\Delta(K)} \geq \frac{2}{d+1} \frac{\min_{s} \left( \mathrm{dist}\left(\bm{p}_{K,s}, \mathrm{aff}(\mathcal{F}_{K,s})\right) \right)}{\Delta(K)}.
    %\label{ratio_relate_two}
\end{align*}
Equivalently, in accordance with  Eq.~\eqref{thickness_def}
\begin{align*}
    \frac{\rho(K)}{\Delta(K)} \geq \frac{2d}{d+1} \min_{s} \left( \frac{\mathrm{dist}\left(\bm{p}_{K,s}, \mathrm{aff}(\mathcal{F}_{K,s})\right)}{d \, \Delta(K)} \right) = \frac{2d}{d+1} \Xi(K).
    %\label{ratio_relate_three}
\end{align*}
Therefore, by Eq.~\eqref{sigma_partial}
\begin{align*}
    \sigma(K) &\leq \frac{d+1}{2d} \frac{1}{\Xi(K)},
\end{align*}
which is the desired relationship between $\sigma(K)$ and $\Xi(K)$. Furthermore,
\begin{align*}
    C_{\sigma} = \max_{K \in \mathcal{T}_h} (\sigma(K)) &\leq \frac{d+1}{2d} \frac{1}{\min_{K \in \mathcal{T}_h}(\Xi(K))} = \frac{d+1}{2d} \frac{1}{C_{\Xi}}.
\end{align*}
Upon extracting the following inequality
\begin{align*}
    C_{\sigma} &\leq \frac{d+1}{2d} \frac{1}{C_{\Xi}},
\end{align*}
and combining with Eq.~\eqref{thickness_bound}, one obtains
\begin{align}
    C_{\sigma} &\leq \frac{d+1}{2d} \frac{8 d \varepsilon^2}{\delta^2} = 4(d+1)\frac{\varepsilon^2}{\delta^2}.
\end{align}
Therefore, the parameter $C_{\sigma}$ is bounded above on a protected Delaunay mesh, where the upper bound is inversely proportional to the square of the protection, $\delta$.

\label{protected_delaunay_two_remark}
\end{remark}

\vspace{10pt}

\begin{remark}[Maximum Protection]
     In light of the results above, we seek protected Delaunay meshes with a maximal level of protection $\delta$ for arbitrary $d$. The current best procedure for generating unstructured, protected Delaunay meshes is given by~\cite{boissonnat2018geometric}. Here, the protection is guaranteed to be at least
     \begin{align}
         \delta \sim \mathcal{O}\left(\frac{1}{2^{d^2}}\right).
         \label{unstructured_protection}
     \end{align}
     Alternatively, the current best procedure for generating structured protected Delaunay meshes is given by Coxeter reflection~\cite{choudhary2020coxeter}. In particular, the Coxeter triangulations of type $\widetilde{A}_d$ are protected Delaunay triangulations, which are guaranteed to have the following protection
     \begin{align}
         \delta \sim \mathcal{O}\left(\frac{1}{d^2}\right).
         \label{structured_protection}
     \end{align}
     This level of protection is the largest of any known triangulation. 

     Finally, we note that the gap between the protection estimates for the unstructured and structured cases (Eqs.~\eqref{unstructured_protection} and \eqref{structured_protection}, respectively) can likely be improved. In particular, the protection estimate for the unstructured case decays exponentially with the number of dimensions $d$, and is therefore, an example of the `curse of dimensionality'. However, given the exceptional quality of the structured estimate, we are optimistic that the unstructured estimate can be improved. 
\end{remark}

%We are now ready to proceed with our analysis.

\section{Gradient Interpolation: Results for General Problems} 
\label{gradient_interpolation_section}

In this section, our objective is to construct error estimates for gradient interpolation, for general (not necessarily elliptic) problems. These estimates take the following form
\begin{align}
     \label{interp_lambda_estimate}
    \left\| \nabla v - \mathcal{I}(\nabla v) \right\|_{L_{\lambda}(\Omega)} &\leq C  \mathdutchcal{h}^r \left\| \nabla(\nabla v) \right\|_{L_{\lambda}(\Omega)}, 
    \\[1.0ex]
    \left\| \nabla v - \mathcal{I}(\nabla v) \right\|_{L_{2}(\Omega)} &\leq C \tilde{C}(\mathcal{T}_h) \mathdutchcal{h}^r \left\| \nabla(\nabla v) \right\|_{L_{2\order}(\Omega)}, \label{interp_l2_estimate}
\end{align}
where $1\leq \lambda \leq \infty$ and $1 < \order \leq \infty$. Throughout this section, we assume that $v$ is a sufficiently smooth function, defined on the mesh $\mathcal{T}_h$.

In what follows, we refer to  Eq.~\eqref{interp_lambda_estimate} as the \emph{$L_{\lambda}$ error bound}, due to the presence of the $L_{\lambda}$ norm on the left hand side of this equation. Similarly, we refer to Eq.~\eqref{interp_l2_estimate} as the \emph{$L_2$ error bound} due to the presence of the $L_2$ norm on the left hand side of this equation.  

We will first establish the $L_{2}$ error bound in Eq.~\eqref{interp_l2_estimate}, as this result is the most complex, and illustrates all of the important tools for proving the remaining error bound.

\subsection{$L_{2}$ Error Bound}

Let us explain our intuition for the proof, before proceeding further. As we discussed previously, Rajan's functional is a weighted summation of squared edge-lengths across all elements in the mesh, $\mathcal{T}_h$ (see Definition~\ref{rajans_functional_exclam}). A Delaunay mesh minimizes this edge functional, relative to all other triangulations of the same point set. Our objective is to leverage this edge  functional to construct an error estimate for the gradient. In particular, we observe that it is possible to formulate a directional derivative
\begin{align}
    \nabla_{\bm{p}_{K,ij}} v \equiv \nabla v \cdot \bm{p}_{K,ij},
\end{align}
where $\nabla_{\bm{p}_{K,ij}} v$ is the derivative of $v$ in the direction of the edge $\bm{p}_{K,ij}$. With this observation in mind, we can construct a \emph{roughness} functional of the directional derivative. Thereafter, we can prove the equivalence of the roughness functional and the $L_2$-norm of the gradient. Finally, we can use this equivalence, in conjunction with an interpolation estimate, to create an upper bound for the $L_2$-norm of the gradient error in terms of Rajan's functional and the Hessian of $v$. Each of these steps are carried out in what follows.

%In what follows, we introduce  definitions for a \emph{roughness} functional and a gradient norm. Next, we prove that the roughness functional is equivalent to the gradient norm. Thereafter, we leverage this result to construct an upper bound for the gradient interpolation error. 

\vspace{10pt}

\begin{definition}[Roughness Functional]
Consider the following, non-negative functional of $v$ over the mesh $\mathcal{T}_h$
\begin{align}
    \Psi(\nabla v) \equiv  \left[\sum_{K \in \mathcal{T}_{h}} \frac{1}{h_{K}^{2}} \int_{K} \sum_{i=1}^{d+1} \sum_{j=1}^{i-1}\left(\mathrm{abs}(\nabla v)\cdot \mathrm{abs}(\bm{p}_{K,ij}) \right)^{2} dV \right]^{1/2}, \label{rough_func}
\end{align}
where we assume that $v$ is a piecewise-$H^1$-scalar field on $\Omega$, and $h_{K}$ is a characteristic length scale associated with each $K$. In principle, $h_K$ can be chosen as any length scale associated with the element, such as the diameter $\Delta(K)$, insphere diameter $\rho(K)$, min-containment ball radius $R_{K,\min}$, or even the circumsphere diameter. However, for our purposes, it will often be most convenient to set $h_K = \Delta(K)$.
\label{isotropic_functional}
\end{definition}

\vspace{10pt}

We note that the presence of the $\mathrm{abs}$ symbols (in Eq.~\eqref{rough_func}) makes it easier for us to construct a lower bound for $\Psi(\nabla v)$ in the work that follows.

\vspace{10pt}

\begin{definition}[Gradient Norm]
Consider the following gradient norm of $v$ over the mesh $\mathcal{T}_{h}$
\begin{align}
    \left\| \nabla v \right\|_{L_{2}(\Omega)} &\equiv \left[ \sum_{K\in \mathcal{T}_{h}} \int_{K} \nabla v \cdot \nabla v \, dV \right]^{1/2},
\end{align}
where we assume that $v$ is a piecewise-$H^1$-scalar field on $\Omega$.
\label{isotropic_gradient_norm}
\end{definition}

We are now ready to show that the roughness functional and the gradient norm are equivalent.

\vspace{10pt}

\begin{lemma}[Equivalence of the Roughness Functional and the Gradient Norm]
    The functional in Definition~\ref{isotropic_functional} and the gradient norm in Definition~\ref{isotropic_gradient_norm} are equivalent in the following sense
    \begin{align}
   C_d \, C_{\Xi} \left\| \nabla v \right\|_{L_{2}(\Omega)} \leq \Psi(\nabla v) \leq  C_{\Upsilon} \left\| \nabla v \right\|_{L_{2}(\Omega)},
        \label{functional_equivalence}
    \end{align}
    where $v$ resides in the space of piecewise-$H^1$-scalar fields on $\Omega$, $C_{\Xi}$ and $C_{\Upsilon}$ are constants that depend on the mesh, and
    \begin{align}
        C_d = \sqrt{\frac{d(d+1)}{2}}.
    \end{align}
    \label{functional_equivalence_lemma}
\end{lemma}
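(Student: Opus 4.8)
The plan is to work element-by-element, since both $\Psi(\nabla v)^2$ and $\|\nabla v\|_{L_2(\Omega)}^2$ decompose as sums over $K \in \mathcal{T}_h$. Fix an element $K$ and set $h_K = \Delta(K)$. On $K$ the gradient $\nabla v$ is (for $v$ piecewise $H^1$) an $L_2$ vector field; write $\bm{g} = \nabla v$ and $\bm{a} = \mathrm{abs}(\bm{g})$, a vector with non-negative entries. The quantity inside the $K$-integral in \eqref{rough_func} is $\sum_{i>j}\big(\bm{a}\cdot\mathrm{abs}(\bm{p}_{K,ij})\big)^2$, while the quantity inside the $K$-integral of the gradient norm is $\bm{g}\cdot\bm{g} = \bm{a}\cdot\bm{a} = |\bm{a}|^2$. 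So the entire lemma reduces to a pointwise, purely linear-algebraic two-sided bound: for every vector $\bm{a}$ with non-negative components,
\begin{align}
    \big(C_d\, C_\Xi\big)^2\, |\bm{a}|^2 \;\le\; \frac{1}{\Delta(K)^2}\sum_{i=1}^{d+1}\sum_{j=1}^{i-1}\big(\bm{a}\cdot\mathrm{abs}(\bm{p}_{K,ij})\big)^2 \;\le\; C_\Upsilon^2\, |\bm{a}|^2, \label{proposal_pointwise}
\end{align}
after which I integrate over $K$, sum over $K$, and take square roots; the mesh maxima/minima defining $C_\Xi$ and $C_\Upsilon$ absorb the element-dependence exactly as in \eqref{min_thickness} and \eqref{local_constant}.

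For the \emph{upper} bound in \eqref{proposal_pointwise}, apply Cauchy--Schwarz to each term: $\big(\bm{a}\cdot\mathrm{abs}(\bm{p}_{K,ij})\big)^2 \le |\bm{a}|^2\,|\bm{p}_{K,ij}|^2$. Summing over the $d(d+1)/2$ edges gives $\sum_{i>j}\big(\bm{a}\cdot\mathrm{abs}(\bm{p}_{K,ij})\big)^2 \le |\bm{a}|^2 \sum_{i>j}|\bm{p}_{K,ij}|^2$, and dividing by $\Delta(K)^2$ produces exactly $\Upsilon(K)^2\,|\bm{a}|^2 \le C_\Upsilon^2\,|\bm{a}|^2$ by the definition of $\Upsilon(K)$. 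This direction is routine and needs no positivity of the components — the $\mathrm{abs}$ is irrelevant here.

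The \emph{lower} bound is the main obstacle and is where $\mathrm{abs}$ and the thickness enter. The matrix $M_K$ whose rows are the edge vectors $\bm{p}_{K,ij}$ has rank $d$, and $\bm{a}^T M_K^T M_K \bm{a} = \sum_{i>j}(\bm{a}\cdot\bm{p}_{K,ij})^2$; if we could replace $\mathrm{abs}(\bm{p}_{K,ij})$ by $\bm{p}_{K,ij}$, the lower bound would follow from the smallest eigenvalue of $M_K^T M_K$, which is controllable via thickness. The point of writing $\mathrm{abs}$ on both factors is that $\bm{a}\cdot\mathrm{abs}(\bm{p}_{K,ij}) \ge |\,\bm{a}\cdot\bm{p}_{K,ij}\,|$ whenever $\bm{a}$ has non-negative entries (a triangle-inequality/sign-cancellation argument, using $a_m\ge 0$), so $\sum_{i>j}(\bm{a}\cdot\mathrm{abs}(\bm{p}_{K,ij}))^2 \ge \sum_{i>j}(\bm{a}\cdot\bm{p}_{K,ij})^2 = \bm{a}^T M_K^T M_K\bm{a} \ge \lambda_{\min}(M_K^T M_K)\,|\bm{a}|^2$. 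It then remains to bound $\lambda_{\min}(M_K^T M_K)$ from below by $\big(C_d\,\Xi(K)\,\Delta(K)\big)^2$. I would extract this from the geometry of the simplex: pick any $d$ of the edge vectors emanating from a common vertex, say $\bm{p}_{K,s}$; the component of each such edge orthogonal to the span of the others is at least $\mathrm{dist}(\bm{p}_{K,s'},\mathrm{aff}(\mathcal F_{K,s'})) \ge d\,\Xi(K)\,\Delta(K)$, which gives a Gram-matrix / base-times-height lower bound $\lambda_{\min} \ge \big(d\,\Xi(K)\,\Delta(K)\big)^2$ up to a combinatorial factor. Matching this against the claimed constant $C_d^2\,C_\Xi^2\,\Delta(K)^2 = \tfrac{d(d+1)}{2}\,C_\Xi^2\,\Delta(K)^2$ is a bookkeeping exercise that pins down where the factor $\sqrt{d(d+1)/2}$ — essentially $\sqrt{\#\text{edges}}$ — comes from; I expect the honest constant is at least this good, and the cleanest route may be to bound each of the $d(d+1)/2$ edge-contributions below individually rather than diagonalizing $M_K^T M_K$. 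Once \eqref{proposal_pointwise} holds on every element, integrating and reassembling the sums gives \eqref{functional_equivalence}. $\hfill\square$
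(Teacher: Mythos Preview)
Your overall scaffolding is sound and matches the paper: reduce to a pointwise two-sided inequality on each element with $h_K=\Delta(K)$, integrate, sum, take square roots. Your upper bound via Cauchy--Schwarz is exactly the paper's argument.

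The lower bound, however, has a genuine gap. You pass from $\sum_{i>j}(\bm{a}\cdot\mathrm{abs}(\bm{p}_{K,ij}))^2$ to $\bm{a}^T M_K^T M_K\,\bm{a}$ and then assert $\lambda_{\min}(M_K^T M_K)\ge C_d^2\,\Xi(K)^2\,\Delta(K)^2$ as a ``bookkeeping exercise.'' It is not. Knowing that each edge vector from a fixed vertex has orthogonal component at least the minimum altitude controls the \emph{determinant} of the $d\times d$ edge matrix, not its smallest singular value; the passage from altitudes to $\sigma_{\min}$ costs an uncontrolled factor and does not land on $C_d=\sqrt{d(d+1)/2}$. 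Your alternative, bounding each of the $d(d+1)/2$ edge-contributions from below individually, fails outright since any single term $(\bm{a}\cdot\bm{p}_{K,ij})^2$ vanishes whenever $\bm{a}\perp\bm{p}_{K,ij}$. So no positive constant, let alone $C_d$, is actually established by your sketch.

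The paper sidesteps the eigenvalue question entirely by exploiting the $\mathrm{abs}$ in a different way. Since $(\sum_m x_m)^2\ge\sum_m x_m^2$ for non-negative reals, one has $(\mathrm{abs}(\bm{a})\cdot\mathrm{abs}(\bm{p}_{K,ij}))^2\ge\sum_m a_m^2\,(p_{K,ij}^m)^2$; summing over edges yields
\[
\sum_{i>j}\big(\mathrm{abs}(\bm{a})\cdot\mathrm{abs}(\bm{p}_{K,ij})\big)^2\;\ge\;\min_m\Big(\textstyle\sum_{i>j}(p_{K,ij}^m)^2\Big)\,|\bm{a}|^2.
\]
This reduces the task to lower-bounding, for each coordinate $m$, the \emph{diagonal entry} $\sum_{i>j}(p_{K,ij}^m)^2$ of $M_K^T M_K$ --- a scalar, not an eigenvalue. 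The paper then shows, via a Laplace expansion of the volume determinant along the $m$th row combined with $|K|=\tfrac{1}{d}|\mathcal F_{K,s}|\cdot(\text{altitude from }\bm{p}_{K,s})$ and an RMS--AM step, that each such diagonal entry is at least $\tfrac{d+1}{2d}\big(\min_s\mathrm{dist}(\bm{p}_{K,s},\mathrm{aff}(\mathcal F_{K,s}))\big)^2=C_d^2\,\Xi(K)^2\,\Delta(K)^2$. That determinant/altitude computation is where $\sqrt{d(d+1)/2}$ genuinely arises; it is geometric, not merely the edge count.
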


\begin{proof}
    We start by establishing the lower bound and recovering $C_{\Xi}$. From the definition of $\Psi(\nabla v)$, we have
\begin{align}
    \label{iso_lower_two}
    \left(\Psi(\nabla v)\right)^{2} &= \sum_{K \in \mathcal{T}_{h}} \frac{1}{h_{K}^{2}} \int_{K} \sum_{i=1}^{d+1} \sum_{j=1}^{i-1}\left(\mathrm{abs}(\nabla v)\cdot \mathrm{abs}(\bm{p}_{K,ij}) \right)^{2} dV
    \\[1.0ex]
    \nonumber &\geq \sum_{K \in \mathcal{T}_{h}} \frac{1}{h_{K}^{2}} \int_{K} \min_{m} \left(\sum_{i=1}^{d+1} \sum_{j=1}^{i-1} \left( p_{K,ij}^{m} \right)^2 \right) \left( \nabla v\cdot \nabla v \right) dV,
\end{align}
where we have used the following sequence of inequalities
\begin{align}
    \label{expanded_product}
   \sum_{i=1}^{d+1} \sum_{j=1}^{i-1} \left(\mathrm{abs}(\nabla v)\cdot \mathrm{abs}(\bm{p}_{K,ij}) \right)^{2} &= \sum_{i=1}^{d+1} \sum_{j=1}^{i-1} \left( \sum_{m=1}^{d} |(\nabla v)^{m}| |p_{K,ij}^{m}| \right)^{2} \\[1.0ex]
   \nonumber &\geq \sum_{i=1}^{d+1} \sum_{j=1}^{i-1} \left[ \sum_{m=1}^{d} \left( |(\nabla v)^{m}| |p_{K,ij}^{m}| \right)^{2} \right] \\[1.0ex]
   \nonumber &= \sum_{i=1}^{d+1} \sum_{j=1}^{i-1} \left[ \sum_{m=1}^{d} \left( (\nabla v)^{m} \right)^{2} \left(p_{K,ij}^{m} \right)^{2} \right] \\[1.0ex]
    \nonumber &=  \sum_{m=1}^{d} \left[ \left( (\nabla v)^{m} \right)^{2} \sum_{i=1}^{d+1} \sum_{j=1}^{i-1} \left(p_{K,ij}^{m} \right)^{2}\right] \\[1.0ex]
   \nonumber &\geq \min_{m} \left(  \sum_{i=1}^{d+1} \sum_{j=1}^{i-1} \left(p_{K,ij}^{m} \right)^{2} \right) \sum_{m=1}^{d} \left( (\nabla v)^{m} \right)^{2}.
\end{align}
The second line of Eq.~\eqref{expanded_product} follows from the observation that the sum of the squares is less than the square of the sum in cases where all the terms are non-negative.

Let us return our attention to Eq.~\eqref{iso_lower_two}. This equation is the centerpiece of our analysis; however the RHS of this equation is difficult to interpret in its current form. In particular, it remains for us to bound the first term in the integrand of Eq.~\eqref{iso_lower_two}---i.e.~the argument of the minimum function
\begin{align}
    \sum_{i=1}^{d+1} \sum_{j=1}^{i-1} \left( p_{K,ij}^{m} \right)^2.
    \label{term_to_be_bounded}
\end{align}
Bounding this term requires a fairly lengthy process (see Appendix~A), which leverages various geometric properties of the simplex in higher dimensions. This process yields the following inequality
\begin{align}
   \frac{d+1}{2d} \left( \min_{s} \left[\mathrm{dist}\left(\bm{p}_{K,s}, \mathrm{aff}(\mathcal{F}_{K,s})\right)\right] \right)^2 \leq  \sum_{i=1}^{d+1} \sum_{j=1}^{i-1}  \left( p_{K,ij}^{m} \right)^2, \label{simple_volume_two}
\end{align}
where we recall that the function $\mathrm{dist}(\cdot,\cdot)$ returns the shortest distance between the vertex $\bm{p}_{K,s}$ and the affine hull of its opposite facet, $\mathrm{aff}(\mathcal{F}_{K,s})$.
Next, substituting Eq.~\eqref{simple_volume_two} into Eq.~\eqref{iso_lower_two}, and setting $h_K = \Delta(K)$, yields
\begin{align}
    \label{iso_lower_three}
      \left(\Psi(\nabla v)\right)^{2} &\geq \sum_{K \in \mathcal{T}_{h}} \frac{1}{\Delta(K)^{2}} \int_{K} \min_{m} \left(\sum_{i=1}^{d+1} \sum_{j=1}^{i-1} \left( p_{K,ij}^{m} \right)^2 \right) \left( \nabla v\cdot \nabla v \right) dV \\[1.0ex]
    &\geq \nonumber \frac{d+1}{2d} \sum_{K \in \mathcal{T}_{h}} \frac{1}{\Delta(K)^{2}} \int_{K} \min_{m} \left( \min_{s} \left[\mathrm{dist}\left(\bm{p}_{K,s}, \mathrm{aff}(\mathcal{F}_{K,s})\right)\right] \right)^2 \left( \nabla v\cdot \nabla v \right) dV 
     \\[1.0ex]
    &\geq \nonumber \frac{d+1}{2d} \min_{K \in \mathcal{T}_h} \left( \frac{\min_{s} \left[\mathrm{dist}\left(\bm{p}_{K,s}, \mathrm{aff}(\mathcal{F}_{K,s})\right)\right]}{\Delta(K)}  \right)^2 \sum_{K \in \mathcal{T}_{h}} \int_{K} (\nabla v \cdot \nabla v) \, dV  \\[1.0ex]
    &= \nonumber \frac{d+1}{2d}  \min_{K \in \mathcal{T}_h} \left( \frac{\min_{s} \left[\mathrm{dist}\left(\bm{p}_{K,s}, \mathrm{aff}(\mathcal{F}_{K,s})\right)\right]}{\Delta(K)} \right)^2 \left\| \nabla v \right\|_{L_{2}(\Omega)}^{2}.
\end{align}
Upon rewriting Eq.~\eqref{iso_lower_three} by setting
\begin{align}
    \label{sliver_constant}
     \min_{K \in \mathcal{T}_h} \left( \frac{\min_{s} \left[\mathrm{dist}\left(\bm{p}_{K,s}, \mathrm{aff}(\mathcal{F}_{K,s})\right)\right]}{\Delta(K)} \right)^{2} = d^2 C_{\Xi}^{2},
\end{align}
using the identity in Eq.~\eqref{min_thickness}, and taking the square root of both sides, we obtain the desired lower bound for $\Psi(\nabla v)$.

Next, we will construct the upper bound for the roughness functional and recover the constant $C_{\Upsilon}$. From the definition of the  functional, we have
\begin{align}
    \label{iso_upper_one}
    \left(\Psi(\nabla v)\right)^{2} &= \sum_{K \in \mathcal{T}_{h}}  \frac{1}{h_{K}^{2}} \int_{K} \sum_{i=1}^{d+1} \sum_{j=1}^{i-1}\left(\mathrm{abs}(\nabla v)\cdot \mathrm{abs}(\bm{p}_{K,ij}) \right)^{2} dV \\[1.0ex]
    \nonumber &\leq \sum_{K \in \mathcal{T}_{h}}  \frac{1}{h_{K}^{2}} \int_{K} \sum_{i=1}^{d+1} \sum_{j=1}^{i-1}\left(\mathrm{abs}(\nabla v)\cdot \mathrm{abs}(\nabla v) \right)\left(\mathrm{abs}(\bm{p}_{K,ij})\cdot \mathrm{abs}(\bm{p}_{K,ij}) \right) dV
    \\[1.0ex]
    \nonumber &= \sum_{K \in \mathcal{T}_{h}}  \frac{1}{h_{K}^{2}} \int_{K} \left(\nabla v\cdot \nabla v \right)\sum_{i=1}^{d+1} \sum_{j=1}^{i-1}\left(\bm{p}_{K,ij}\cdot \bm{p}_{K,ij} \right) dV
    \\[1.0ex]
    \nonumber &= \sum_{K \in \mathcal{T}_{h}}  \frac{1}{h_{K}^{2}} \sum_{i=1}^{d+1} \sum_{j=1}^{i-1}\left(\bm{p}_{K,ij}\cdot \bm{p}_{K,ij} \right) \int_{K} \left(\nabla v\cdot \nabla v \right) dV
    \\[1.0ex]
    \nonumber &\leq \max_{K \in \mathcal{T}_h} \left( \frac{1}{h_{K}^{2}} \sum_{i=1}^{d+1} \sum_{j=1}^{i-1}\left(\bm{p}_{K,ij}\cdot \bm{p}_{K,ij} \right) \right) \sum_{K \in \mathcal{T}_{h}} \int_{K} \left(\nabla v\cdot \nabla v \right) dV
    \\[1.0ex]
    \nonumber &= \max_{K \in \mathcal{T}_h} \left( \frac{1}{\Delta(K)^{2}} \sum_{i=1}^{d+1} \sum_{j=1}^{i-1}\left(\bm{p}_{K,ij}\cdot \bm{p}_{K,ij} \right) \right) \left\| \nabla v \right\|_{L_{2}(\Omega)}^{2},
\end{align}
where the Cauchy-Schwarz inequality has been used on the second line, and $h_K = \Delta(K)$ has been used on the last line. Upon rewriting Eq.~\eqref{iso_upper_one} by setting
\begin{align}
     \max_{K \in \mathcal{T}_h} \left( \frac{1}{\Delta(K)^2} \sum_{i=1}^{d+1} \sum_{j=1}^{i-1}\left(\bm{p}_{K,ij}\cdot \bm{p}_{K,ij} \right) \right) = C_{\Upsilon}^{2},
    \label{coarse_constant}
\end{align}
using the identity in Eq.~\eqref{local_constant}, and taking the square root of both sides, we obtain the desired upper bound for $\Psi(\nabla v)$.
\end{proof}
    
In what follows, we will introduce an upper bound for the roughness functional in terms of Rajan's functional (Definition~\ref{rajans_functional_exclam}).

\vspace{10pt}

\begin{lemma}[Upper Bound for the Roughness Functional]
    The functional in Definition~\ref{isotropic_functional} is bounded above by the gradient as follows
    \begin{align}
        \label{iso_upper_bound}
         \Psi(\nabla v) \leq C_{\order} \Theta^{((\order-1)/2\order)} \left(\frac{ R_{\max}^{(1/\order)}}{\min_{K \in \mathcal{T}_h} \Delta(K)} \right)   \left\| \nabla v \right\|_{L_{2\order}(\Omega)},
    \end{align}
    where $v$ is a piecewise-$W^{1,2\order}$ scalar field over $\Omega$, $1 < \order \leq \infty$, $\Theta$ is Rajan's functional that depends on the mesh, $R_{\max}$ is the maximum min-containment ball radius, and $C_{\order}$ is a constant that is independent of the mesh. 
    \label{iso_upper_bound_lemma}
\end{lemma}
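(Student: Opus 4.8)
The plan is to begin from the intermediate estimate already obtained in the proof of Lemma~\ref{functional_equivalence_lemma}: applying the Cauchy--Schwarz inequality to the dot products inside $\bigl(\Psi(\nabla v)\bigr)^2$ and choosing $h_K=\Delta(K)$ yields (this is the third line of the chain in Eq.~\eqref{iso_upper_one})
\[
\bigl(\Psi(\nabla v)\bigr)^2 \;\le\; \sum_{K\in\mathcal{T}_h}\frac{1}{\Delta(K)^2}\biggl(\,\sum_{i=1}^{d+1}\sum_{j=1}^{i-1}\bm{p}_{K,ij}\cdot\bm{p}_{K,ij}\biggr)\int_K \nabla v\cdot\nabla v\,dV .
\]
Using the definition of $\Theta(K)$ in Eq.~\eqref{rajans_functional_element}, the coefficient is $\tfrac{1}{\Delta(K)^2}\sum_{i,j}\bm{p}_{K,ij}\cdot\bm{p}_{K,ij}=\Theta(K)/(\Delta(K)^2|K|)$, so the right-hand side becomes $\sum_K \tfrac{\Theta(K)}{\Delta(K)^2|K|}\int_K\nabla v\cdot\nabla v\,dV$. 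The remainder of the proof is a double application of Hölder's inequality to this sum, arranged so that $\|\nabla v\|_{L_{2\order}(\Omega)}^2$ factors out cleanly and everything else reduces to $\Theta$, $R_{\max}$ and $\min_K\Delta(K)$.

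First I would apply Hölder on each element with conjugate exponents $\order$ and $\order/(\order-1)$ to the integral of $\nabla v\cdot\nabla v$ against the constant $1$, which gives $\int_K\nabla v\cdot\nabla v\,dV\le \bigl(\int_K(\nabla v\cdot\nabla v)^{\order}\,dV\bigr)^{1/\order}|K|^{(\order-1)/\order}$; comparing $(\nabla v\cdot\nabla v)^{\order}$ with $\sum_m |(\nabla v)^m|^{2\order}$ via a power-mean inequality introduces only a dimension-dependent constant and turns the first factor into a power of the element's contribution to $\|\nabla v\|_{L_{2\order}(\Omega)}^{2\order}$. Substituting back, the $|K|^{(\order-1)/\order}$ partially cancels the $|K|$ in the denominator. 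Next I would apply the discrete Hölder inequality over the elements $K$, again with exponents $\order/(\order-1)$ and $\order$, splitting the sum so that the $\order$-factor collects exactly $\bigl(\sum_K\int_K\sum_m|(\nabla v)^m|^{2\order}\,dV\bigr)^{1/\order}=\|\nabla v\|_{L_{2\order}(\Omega)}^2$, leaving a purely geometric sum $\bigl(\sum_K a_K^{\order/(\order-1)}\bigr)^{(\order-1)/\order}$ with $a_K=\Theta(K)/(\Delta(K)^2|K|^{1/\order})$.

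The final step is to bound this geometric sum. Writing $\Theta(K)=\bigl(\sum_{i,j}\bm{p}_{K,ij}\cdot\bm{p}_{K,ij}\bigr)|K|$ and using $q/\order=q-1=1/(\order-1)$ for $q=\order/(\order-1)$, one checks that all powers of $|K|$ cancel exactly, leaving $a_K^{q}=\Theta(K)\,\bigl(\sum_{i,j}\bm{p}_{K,ij}\cdot\bm{p}_{K,ij}\bigr)^{q-1}\Delta(K)^{-2q}$. The crude bound $\sum_{i,j}\bm{p}_{K,ij}\cdot\bm{p}_{K,ij}\le \tfrac{d(d+1)}{2}\Delta(K)^2$ (there are $d(d+1)/2$ edges, each of length at most $\Delta(K)$) gives $a_K^{q}\le \bigl(\tfrac{d(d+1)}{2}\bigr)^{q-1}\Theta(K)/\Delta(K)^2\le \bigl(\tfrac{d(d+1)}{2}\bigr)^{q-1}\Theta(K)/(\min_K\Delta(K))^2$, so summing over $K$ produces $\Theta$ and $\bigl(\sum_K a_K^q\bigr)^{1/q}\le \bigl(\tfrac{d(d+1)}{2}\bigr)^{1/\order}\Theta^{(\order-1)/\order}(\min_K\Delta(K))^{-2(\order-1)/\order}$. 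Since $2(\order-1)/\order=2-2/\order$ and $\Delta(K)\le 2R_{K,\min}\le 2R_{\max}$ (Eqs.~\eqref{radius_bound}--\eqref{radius_max}), I would trade one factor $(\min_K\Delta(K))^{2/\order}$ for $(2R_{\max})^{2/\order}$, take the square root, and absorb all dimension- and $\order$-dependent prefactors into $C_\order$; this is exactly Eq.~\eqref{iso_upper_bound}. The case $\order=\infty$ follows either by reading the same argument in the limit or, more directly, by replacing the per-element step with $\int_K\nabla v\cdot\nabla v\,dV\le d\,\|\nabla v\|_{L_\infty(K)}^2|K|$.

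The only genuine obstacle is the bookkeeping of exponents: one must recognise that the two Hölder exponents are forced to be $\order/(\order-1)$ and $\order$ (this is what makes $\|\nabla v\|_{L_{2\order}(\Omega)}^2$ come out on the nose) and — the key point — that with this choice the volume weights $|K|$ cancel completely in the geometric sum, which is precisely why $C_\order$ can be taken independent of the mesh. A minor secondary point is reconciling $\nabla v\cdot\nabla v$ (what Cauchy--Schwarz naturally produces) with the $\ell^{2\order}$-type norm $\|\nabla v\|_{L_{2\order}(\Omega)}$ of Section~\ref{prelim_section}, which costs only a harmless dimensional constant.
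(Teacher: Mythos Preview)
Your proposal is correct and follows essentially the same route as the paper: Cauchy--Schwarz on the dot products (reusing Eq.~\eqref{iso_upper_one}), per-element H\"older with exponents $\order$ and $\order/(\order-1)$, a discrete H\"older over $K$ to factor out $\|\nabla v\cdot\nabla v\|_{L_\order(\Omega)}$ and $\Theta^{(\order-1)/\order}$, the power-mean step $\|\nabla v\cdot\nabla v\|_{L_\order}\le d^{(\order-1)/\order}\|\nabla v\|_{L_{2\order}}^2$, and the crude edge bound $\sum_{i,j}\bm p_{K,ij}\cdot\bm p_{K,ij}\le \tfrac{d(d+1)}{2}\Delta(K)^2$ together with $\Delta(K)\le 2R_{\max}$. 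The only cosmetic difference is ordering: the paper bounds $(\sum_{i,j}\bm p_{K,ij}\cdot\bm p_{K,ij})^{1/\order}$ and extracts the factor $(\max_K\Delta(K))^{2/\order}/(\min_K\Delta(K))^2$ \emph{before} the discrete H\"older, whereas you apply discrete H\"older first and then simplify the geometric sum $\sum_K a_K^{\order/(\order-1)}$, arriving at $(\min_K\Delta(K))^{2/\order}/(\min_K\Delta(K))^2$ and trading the numerator for $(2R_{\max})^{2/\order}$; both routes yield the same final constant $C_\order$.
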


\begin{proof}
    Consider the definition of the roughness functional (see Definition~\ref{isotropic_functional})
    \begin{align}
    \label{iso_upper_two}
\left(\Psi(\nabla v)\right)^{2} &= 
         \sum_{K \in \mathcal{T}_{h}} \frac{1}{h_{K}^{2}} \int_{K} \sum_{i=1}^{d+1} \sum_{j=1}^{i-1}\left(\mathrm{abs}(\nabla v)\cdot \mathrm{abs}(\bm{p}_{K,ij}) \right)^{2} dV
        \\[1.0ex]
    \nonumber &\leq \sum_{K \in \mathcal{T}_{h}} \frac{1}{h_{K}^{2}}\sum_{i=1}^{d+1} \sum_{j=1}^{i-1}\left(\bm{p}_{K,ij}\cdot \bm{p}_{K,ij} \right) \int_{K} \left(\nabla v\cdot \nabla v \right) dV
        \\[1.0ex]
        \nonumber &\leq \sum_{K \in \mathcal{T}_{h}} \frac{1}{\Delta(K)^2}  \sum_{i=1}^{d+1} \sum_{j=1}^{i-1}\left(\bm{p}_{K,ij}\cdot \bm{p}_{K,ij} \right) |K|^{1 - (1/\order)} \left\|\nabla v\cdot \nabla v \right\|_{L_{\order}(K)},
\end{align}
where $h_K = \Delta(K)$, and H\"{o}lder's inequality has been applied on the last line.  Note: we must assume that $1 < \order < \infty$ for the sake of H\"{o}lder's inequality. Next, upon factoring Eq.~\eqref{iso_upper_two}, one obtains
\begin{align}
        \nonumber \left(\Psi(\nabla v)\right)^{2} &\leq \sum_{K \in \mathcal{T}_{h}} \frac{1}{\Delta(K)^2} \left(\sum_{i=1}^{d+1} \sum_{j=1}^{i-1}\left(\bm{p}_{K,ij}\cdot \bm{p}_{K,ij} \right) \right)^{1/\order} \left(\sum_{i=1}^{d+1} \sum_{j=1}^{i-1}\left(\bm{p}_{K,ij}\cdot \bm{p}_{K,ij} \right) |K|\right)^{(\order-1)/\order} \left\|\nabla v\cdot \nabla v \right\|_{L_{\order}(K)}
        \\[1.0ex]
        \nonumber &\leq \sum_{K \in \mathcal{T}_{h}} \frac{1}{\Delta(K)^2} \left(\sum_{i=1}^{d+1} \sum_{j=1}^{i-1} \Delta(K)^2 \right)^{1/\order} \left(\sum_{i=1}^{d+1} \sum_{j=1}^{i-1}\left(\bm{p}_{K,ij}\cdot \bm{p}_{K,ij} \right) |K|\right)^{(\order-1)/\order} \left\|\nabla v\cdot \nabla v \right\|_{L_{\order}(K)}
        \\[1.0ex]
        \nonumber &= \sum_{K \in \mathcal{T}_{h}} \frac{1}{\Delta(K)^2} \left(\frac{d(d+1)} {2}\Delta(K)^2 \right)^{1/\order} \left(\sum_{i=1}^{d+1} \sum_{j=1}^{i-1}\left(\bm{p}_{K,ij}\cdot \bm{p}_{K,ij} \right) |K|\right)^{(\order-1)/\order} \left\|\nabla v\cdot \nabla v \right\|_{L_{\order}(K)}
        \\[1.0ex]
        \nonumber &\leq \left(\frac{d(d+1)}{2}\right)^{1/\order}  \left(\frac{\max_{K\in \mathcal{T}_h} \Delta(K)^{1/\order}}{\min_{K \in \mathcal{T}_h} \Delta(K)}\right)^2  \\[1.0ex] 
        \nonumber &\times \sum_{K \in \mathcal{T}_{h}}  \left[ \left(\sum_{i=1}^{d+1} \sum_{j=1}^{i-1}\left(\bm{p}_{K,ij}\cdot \bm{p}_{K,ij} \right) |K|\right)^{(\order-1)/\order}  \left\|\nabla v\cdot \nabla v \right\|_{L_{\order}(K)}\right].
    \end{align}
    Furthermore, on using Eqs.~\eqref{radius_bound}, \eqref{radius_max}, and H\"{o}lder's inequality we have that
    \begin{align}
        \nonumber \left(\Psi(\nabla v)\right)^{2} &\leq \left(\frac{d(d+1)}{2}\right)^{1/\order}  \left(\frac{(2 R_{\max})^{1/\order}}{\min_{K \in \mathcal{T}_h} \Delta(K)}\right)^2 \left(\sum_{K \in \mathcal{T}_{h}}   \sum_{i=1}^{d+1} \sum_{j=1}^{i-1}\left(\bm{p}_{K,ij}\cdot \bm{p}_{K,ij} \right) |K|\right)^{(\order-1)/\order}  \\[1.0ex]
         &
        \nonumber \phantom{\leq} \times \left(\sum_{K \in \mathcal{T}_{h}} \left(\left\|\nabla v\cdot \nabla v \right\|_{L_{\order}(K)} \right)^{\order}\right)^{1/\order} 
        \\[1.0ex]
        &= \left(2d(d+1)\right)^{1/\order}  \left(\frac{R_{\max}^{1/\order}}{\min_{K \in \mathcal{T}_h} \Delta(K)}\right)^2 \left(\sum_{K \in \mathcal{T}_{h}}   \sum_{i=1}^{d+1} \sum_{j=1}^{i-1}\left(\bm{p}_{K,ij}\cdot \bm{p}_{K,ij} \right) |K| \right)^{(\order-1)/\order} \left\|\nabla v\cdot \nabla v \right\|_{L_{\order}(\Omega)}. \label{prelim_power_result}
    \end{align}
    Thereafter, we can use the power-mean inequality, as follows
    \begin{align}
        \nonumber \left\|\nabla v\cdot \nabla v \right\|_{L_{\order}(K)} &= \left( \int_{K} \left( \sum_{i=1}^{d} \left(\frac{\partial v}{\partial x_i}\right)^2 \right)^{\order} dV \right)^{1/\order}  \\[1.0ex]
        \nonumber &\leq \left( d^{\order-1} \int_{K}  \left( \sum_{i=1}^{d} \left( \frac{\partial v}{\partial x_i} \right)^{2\order} \right) dV \right)^{1/\order}
        \\[1.0ex]
        \nonumber &= d^{(\order-1)/\order} \left(  \int_{K}  \left( \sum_{i=1}^{d} \left( \frac{\partial v}{\partial x_i} \right)^{2\order} \right) dV \right)^{1/\order}
        \\[1.0ex]
        \nonumber &= d^{(\order-1)/\order} \left(\left(  \int_{K}  \left( \sum_{i=1}^{d} \left( \frac{\partial v}{\partial x_i} \right)^{2\order} \right) dV \right)^{1/2\order}\right)^2
        \\[1.0ex]
         &= d^{(\order-1)/\order} \left\| \nabla v \right\|_{L_{2\order}(K)}^{2}. \label{power_identity_first}
    \end{align}
    Due to our use of the power-mean inequality, we insist that $\order > 1$. Next, upon raising both sides of Eq.~\eqref{power_identity_first} to the power $\order$, summing over the entire mesh, and raising the result to the power $1/\order$, we obtain
    \begin{align}
         \nonumber \left\|\nabla v\cdot \nabla v \right\|_{L_{\order}(\Omega)} = \left(\sum_{K \in \mathcal{T}_h} \left\|\nabla v\cdot \nabla v \right\|_{L_{\order}(K)}^{\order} \right)^{1/\order} &\leq d^{(\order-1)/\order} \left(\sum_{K \in \mathcal{T}_h} \left\| \nabla v \right\|_{L_{2\order}(K)}^{2\order} \right)^{1/\order}  \\[1.0ex]
        \nonumber &\leq d^{(\order-1)/\order} \left(\sum_{K \in \mathcal{T}_h} \left\| \nabla v \right\|_{L_{2\order}(K)}^{2\order} \right)^{2(1/2\order)} \\[1.0ex] 
        &= d^{(\order-1)/\order} \left\| \nabla v \right\|_{L_{2\order}(\Omega)}^{2}. \label{power_identity}
    \end{align}
    Furthermore, upon substituting Eq.~\eqref{power_identity} into Eq.~\eqref{prelim_power_result}, one obtains
    \begin{align}
         \left(\Psi(\nabla v)\right)^{2} &\leq  d \left(2(d+1) \right)^{1/\order} \left(\frac{R_{\max}^{1/\order}}{\min_{K \in \mathcal{T}_h} \Delta(K)}\right)^2  \left(\sum_{K \in \mathcal{T}_{h}}   \sum_{i=1}^{d+1} \sum_{j=1}^{i-1}\left(\bm{p}_{K,ij}\cdot \bm{p}_{K,ij} \right) |K| \right)^{(\order-1)/\order} \left\|\nabla v \right\|_{L_{2\order}(\Omega)}^{2}. \label{second_prelim_power_result}
    \end{align}
    Finally, on setting 
    \begin{align}
        \Theta &= \sum_{K \in \mathcal{T}_{h}} 
        \sum_{i=1}^{d+1} \sum_{j=1}^{i-1}\left(\bm{p}_{K,ij}\cdot \bm{p}_{K,ij} \right) |K|, \\[1.0ex]
        C_{\order} &=  \sqrt{d \left(2(d+1) \right)^{1/\order}},
    \end{align}  
    in Eq.~\eqref{second_prelim_power_result}, and taking the square root of both sides, one obtains the desired result. Our proof holds for $1 < \order < \infty$. The case of $\order = \infty$ follows by a similar argument. 
\end{proof}

\begin{remark}
    We can construct a simple upper bound for Rajan's functional $\Theta$. Towards this end, we note that the following relationship holds 
    \begin{align*}
        \bm{p}_{K,ij} \cdot \bm{p}_{K,ij} \leq (\Delta(K))^2 \leq (2 R_{K,\min})^2,
    \end{align*}
    in accordance with Eq.~\eqref{radius_bound}. Furthermore, we have that
    \begin{align*}
        |K| \leq \frac{(2R_{K,\min})^{d}}{d!}.
    \end{align*}
    Upon combining the two equations above, using Eq.~\eqref{radius_max}, and summing over all elements in the mesh, we obtain the following
    \begin{align}
        \Theta &\leq \frac{2^{d+2}}{d!}R_{\max}^{d+2} \sum_{K \in \mathcal{T}_{h}} 
        \sum_{i=1}^{d+1} \sum_{j=1}^{i-1} 1 \\[1.0ex]
        \nonumber &= \frac{2^{d+2}}{d!}\frac{d(d+1)}{2}R_{\max}^{d+2} \mathrm{card}(\mathcal{T}_h) \\[1.0ex]
        \nonumber &= \frac{2^{d+1}(d+1)}{(d-1)!} R_{\max}^{d+2} \mathrm{card}(\mathcal{T}_h).
    \end{align}
    Here, $\mathrm{card}(\mathcal{T}_h)$ is the total number of elements in the mesh. 
    
    Based on the analysis above, an upper bound for Rajan's functional $\Theta$ depends on the maximum min-containment ball radius. 
\end{remark}

\vspace{10pt}

Now, let us set the previous results aside, and shift our attention to the topic of interpolation. In what follows, we introduce some fundamental results which govern the interpolation of the gradient on our canonical mesh $\mathcal{T}_{h}$. These results leverage classical interpolation theory, and hold true for polynomials of any degree.

\vspace{10pt}

\begin{definition}[Local Lagrange  Interpolation]
    In a fundamental sense, Lagrange interpolation creates a set of polynomial functions which interpolate (recover) the values of a given function at a set of distinct interpolation points. The value of the  interpolant at a particular interpolation point is only supported by a single Lagrange polynomial function, and the value of the interpolant between points is the summation of contributions from all Lagrange polynomials. In order to exactly represent a polynomial of degree $\leq k$, the Lagrange polynomials must be constructed based on a total of $N_p$ interpolation points, where 
    \begin{align*}
        N_p = \frac{(k+d)!}{k!d!}.
    \end{align*}
    To fix ideas, let us consider the Lagrange interpolation of a function $f(x)$ at a set of interpolation points $\{\xi_j\}$ in one dimension, where $1 \leq j \leq (k+1)$. Note that $x$ denotes the coordinate in physical space, and $\xi$ denotes the coordinate in reference space. The concepts of a physical space and a reference space are extremely common for the construction of finite element methods, (see the discussion in Chapter 1 of~\cite{hughes2003finite}). The Lagrange polynomials of degree $k$ are explicitly written as
    \begin{align*}
        L_{j}(\xi) = \frac{(\xi-\xi_1)}{(\xi_j - \xi_1)} \cdots \frac{(\xi-\xi_{j-1})}{(\xi_j - \xi_{j-1})} \frac{(\xi-\xi_{j+1})}{(\xi_j - \xi_{j+1})} \cdots \frac{(\xi-\xi_{k+1})}{(\xi_j - \xi_{k+1})}.
    \end{align*}
    In addition, the Lagrange interpolant is given by
    \begin{align*}
        \mathcal{I}(f) = \sum_{j=1}^{k+1} f(x_j) L_j(\xi(x)),
    \end{align*}
    where each $x_j = x(\xi_j)$ is an interpolation point in physical space, the quantity $x(\xi)$ is a map between the reference and physical spaces, and the quantity $\xi(x)$ is the inverse map between the physical and reference spaces. 
    
    With the above discussion in mind, consider the following local interpolation operator $\mathcal{I}_{K}(\cdot )$ applied to the gradient $\nabla v$ on element $K \in \mathcal{T}_{h}$
    \begin{align}
       \mathcal{I}_{K}((\nabla v)^{i}) &\equiv \mathcal{I}_{K}\left(\frac{\partial v}{\partial x_i} \right), \qquad \text{where} \qquad \mathcal{I}_{K}\left(\frac{\partial v}{\partial x_i} \right) = \sum_{j=1}^{N_{p}} \frac{\partial v}{\partial x_i}(\bm{x}_j)L_j(\bm{\xi}(\bm{x})), \label{grad_interp}
    \end{align}
    and where $i = 1, \ldots, d$ are coordinate indexes, $\bm{x}$ is a vector of coordinates in physical space $(x_1, x_2, \ldots, x_d)$ on the physical element $K$, $\bm{\xi}$ is a vector of coordinates in reference space $(\xi_1, \xi_2, \ldots, \xi_{d})$ on the reference element $\widehat{K}$, $\bm{\xi}(\bm{x})$ is the map from physical space to reference space, $\bm{x}(\bm{\xi})$ is the inverse map from reference space to physical space, $\bm{x}_j = \bm{x}(\bm{\xi}_j)$ is an interpolation point on the element $K$, and $L_j(\bm{\xi})$ is a multi-dimensional Lagrange polynomial which assumes the value of one at $\bm{\xi}_j$ and zero at all other interpolation points. One may consult Figure~\ref{element_map_fig} for an illustration of the relationship between the physical and reference space elements in three dimensions.  
    \label{grad_interp_def_local}
\end{definition}

\vspace{10pt}

\begin{definition}[Global Lagrange  Interpolation]
Consider the following global interpolation operator $\mathcal{I}_{h}(\cdot)$, defined over $\Omega$
\begin{align}
    \forall K \in \mathcal{T}_{h}, \qquad \left(\mathcal{I}_{h} \left(\frac{\partial v}{\partial x_i}\right)\right)\bigg|_{K} \equiv \mathcal{I}_{K}\left(\frac{\partial v}{\partial x_i}\bigg|_{K}\right),
\end{align}
where $\mathcal{I}_{K}(\cdot)$ is defined in Eq.~\eqref{grad_interp}.
\label{grad_interp_def_global}
\end{definition}

Note that, we will often apply the local or global interpolation operators $\mathcal{I}_{K}(\cdot)$ and $\mathcal{I}_{h}(\cdot)$  to vector functions. When this happens, we assume that the interpolation operators are applied component-wise, i.e.
\begin{align}
    \mathcal{I}_{K}\left( \nabla v \right) &\equiv \left( \mathcal{I}_{K} \left(\frac{\partial v}{\partial x_1}\right), \mathcal{I}_{K} \left(\frac{\partial v}{\partial x_2}\right), \ldots, \mathcal{I}_{K} \left(\frac{\partial v}{\partial x_d}\right) \right), \\[1.0ex]
    \mathcal{I}_{h}\left( \nabla v \right) &\equiv \left( \mathcal{I}_{h} \left(\frac{\partial v}{\partial x_1}\right), \mathcal{I}_{h} \left(\frac{\partial v}{\partial x_2}\right), \ldots, \mathcal{I}_{h} \left(\frac{\partial v}{\partial x_d}\right) \right).
\end{align}

\begin{figure}[h!]
    \centering
    \includegraphics[width = 1.0\textwidth]{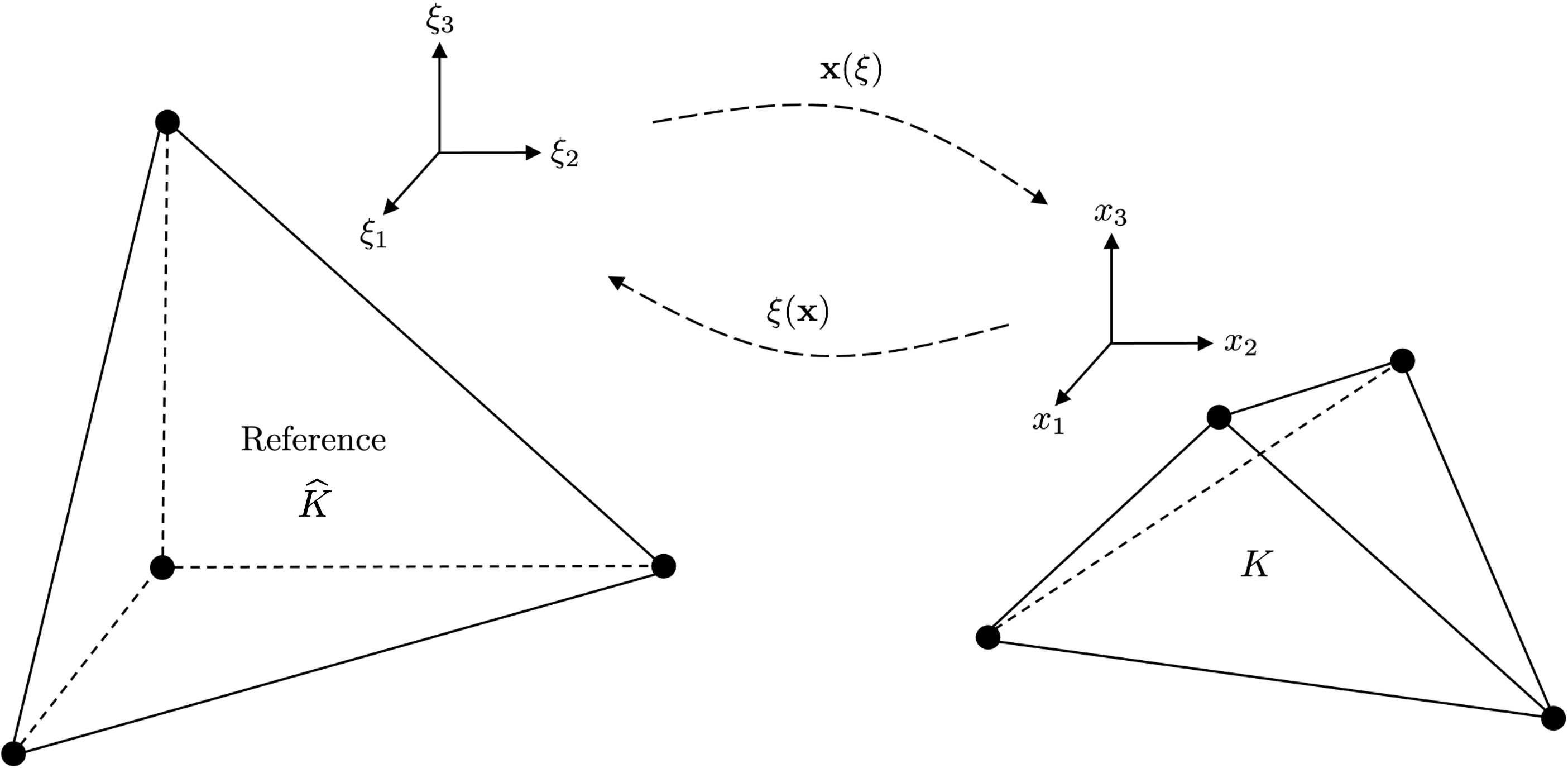}
    \caption{Mapping between the reference element $\widehat{K}$ and the physical element $K$ in three dimensions.}
    \label{element_map_fig}
\end{figure}

\begin{remark}
    In the discussion above, the reference element is denoted by $\widehat{K}$, where we now insist that  $\widehat{\cdot}$ denotes a reference quantity, instead of the operation of omission, as it does in Appendix~A. Furthermore, it is important to note that the Lagrange interpolation operator in reference space, $\mathcal{I}_{\widehat{K}}(\cdot)$, is directly related to the Lagrange interpolation operator in physical space, $\mathcal{I}_{K}(\cdot)$, via the following identity
    \begin{align}
        \nonumber \mathcal{I}_{K} &= \phi_K^{-1} \circ \mathcal{I}_{\widehat{K}} \circ \phi_K, \qquad \phi_K \equiv v \circ \bm{x}(\bm{\xi}).
    \end{align}
\end{remark}

We are now ready to introduce a lemma which governs the error of gradient interpolation.

\vspace{10pt}

\begin{lemma}[Functional Estimate for Gradient Interpolation]
    A measure of the error between the gradient $\nabla v$ and a piecewise polynomial interpolation of the gradient $\mathcal{I}_{h}(\nabla v)$ on the mesh $\mathcal{T}_h$ is given by
    \begin{align}
         \Psi(\nabla v - \mathcal{I}_{h}(\nabla v)) \leq  C_{\Delta} C_{\mathrm{int}} C_{\order} R_{\max}^{(1/\order)} \, \Theta^{((\order-1)/2\order)}   \left\| \nabla( \nabla v) \right\|_{L_{2\order}(\Omega)},
         \label{functional_error_estimate}
    \end{align}
    where $v$ is a piecewise-$W^{2,2\order}$ scalar field over $\Omega$, $1 <\order \leq \infty$, $\Theta$ is Rajan's functional,  $C_{\mathrm{int}}$ and $C_{\order}$ are constants that are independent of the mesh, $C_{\Delta}$ is a constant that depends on the mesh, and $R_{\max}$ is the maximum min-containment ball radius.
\label{functional_error_estimate_lemma}
\end{lemma}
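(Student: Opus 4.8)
The plan is to combine Lemma~\ref{iso_upper_bound_lemma} with a classical element-wise Lagrange interpolation estimate. The first observation is that the proof of Lemma~\ref{iso_upper_bound_lemma} never uses the fact that the argument of $\Psi$ is literally a gradient: the only ingredients are the Cauchy--Schwarz inequality applied to $\mathrm{abs}(\bm{g})\cdot\mathrm{abs}(\bm{p}_{K,ij})$, followed by Hölder's inequality and the power-mean inequality applied to the scalar field $|\bm{g}|^2$. Consequently the bound
\begin{align*}
    \Psi(\bm{g}) \leq C_{\order}\,\Theta^{((\order-1)/2\order)}\left(\frac{R_{\max}^{(1/\order)}}{\min_{K\in\mathcal{T}_h}\Delta(K)}\right)\left\|\bm{g}\right\|_{L_{2\order}(\Omega)}
\end{align*}
is valid for \emph{any} piecewise-$W^{1,2\order}$ vector field $\bm{g}$ on $\Omega$, and I would apply it with $\bm{g}\equiv\nabla v-\mathcal{I}_h(\nabla v)$. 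This field is piecewise-$W^{1,2\order}$ since $v$ is piecewise-$W^{2,2\order}$ and $\mathcal{I}_h(\nabla v)$ is a polynomial on each $K$.

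It then remains to estimate $\left\|\nabla v-\mathcal{I}_h(\nabla v)\right\|_{L_{2\order}(\Omega)}$. For this I would invoke, on each element $K$ and for each coordinate index $i=1,\dots,d$, the standard first-order Lagrange interpolation error estimate obtained from the Bramble--Hilbert lemma and the affine scaling map $\widehat{K}\to K$ (see, e.g., Ern and Guermond~\cite{ern2021finiteI} or Brenner and Scott~\cite{brenner2008mathematical}):
\begin{align*}
    \left\|\frac{\partial v}{\partial x_i}-\mathcal{I}_{K}\!\left(\frac{\partial v}{\partial x_i}\right)\right\|_{L_{2\order}(K)} \leq C_{\mathrm{int}}\,\Delta(K)\,\left\|\nabla\!\left(\frac{\partial v}{\partial x_i}\right)\right\|_{L_{2\order}(K)},
\end{align*}
where $C_{\mathrm{int}}$ depends only on the polynomial degree $k\geq 1$ and the reference simplex, not on the mesh. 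Raising to the power $2\order$, summing over $i$ and over $K\in\mathcal{T}_h$, bounding $\Delta(K)\leq h$, and noting that by definition of the matrix $L_{2\order}$ norm one has $\sum_{K}\sum_{i}\left\|\nabla((\nabla v)^{i})\right\|_{L_{2\order}(K)}^{2\order}=\left\|\nabla(\nabla v)\right\|_{L_{2\order}(\Omega)}^{2\order}$, I obtain
\begin{align*}
    \left\|\nabla v-\mathcal{I}_h(\nabla v)\right\|_{L_{2\order}(\Omega)} \leq C_{\mathrm{int}}\,h\,\left\|\nabla(\nabla v)\right\|_{L_{2\order}(\Omega)}.
\end{align*}

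Substituting this into the displayed bound for $\Psi(\bm{g})$, using $h=\max_{K\in\mathcal{T}_h}\Delta(K)$, and invoking the definition $C_{\Delta}=h/\min_{K\in\mathcal{T}_h}\Delta(K)$ from Eq.~\eqref{mesh_regular} gives exactly
\begin{align*}
    \Psi(\nabla v-\mathcal{I}_h(\nabla v)) \leq C_{\Delta}\,C_{\mathrm{int}}\,C_{\order}\,R_{\max}^{(1/\order)}\,\Theta^{((\order-1)/2\order)}\left\|\nabla(\nabla v)\right\|_{L_{2\order}(\Omega)},
\end{align*}
as claimed; the case $\order=\infty$ follows by the same argument with the power-mean and Hölder steps replaced by their $L_{\infty}$ analogues, exactly as at the end of the proof of Lemma~\ref{iso_upper_bound_lemma}. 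The main obstacle here is not computational but organisational: (i) one must re-read the proof of Lemma~\ref{iso_upper_bound_lemma} to confirm that it applies verbatim to the non-gradient field $\nabla v-\mathcal{I}_h(\nabla v)$; and (ii) one must pin down the mesh-independence of $C_{\mathrm{int}}$ --- in particular, that the first-order ($s=1$) Bramble--Hilbert estimate requires only that $\mathcal{I}_K$ reproduce constants (true for Lagrange interpolation of every degree $k\geq 1$) and that $\nabla v$ admits well-defined nodal values on each element, i.e.\ that the Sobolev embedding $W^{2,2\order}(K)\hookrightarrow C^{1}(\overline{K})$ is in force.
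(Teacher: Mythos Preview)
Your proposal is correct and follows essentially the same route as the paper's own proof: apply Lemma~\ref{iso_upper_bound_lemma} with $\nabla v$ replaced by the error field $\nabla v-\mathcal{I}_h(\nabla v)$, then feed in the element-wise Lagrange estimate $\|\partial_i v-\mathcal{I}_K(\partial_i v)\|_{L_{2\order}(K)}\leq C_{\mathrm{int}}\Delta(K)\|\nabla(\partial_i v)\|_{L_{2\order}(K)}$ (the paper cites Ern--Guermond, Theorem~1.103), sum over $i$ and $K$, and collect $\max_K\Delta(K)/\min_K\Delta(K)=C_\Delta$. Your closing remarks on mesh-independence of $C_{\mathrm{int}}$ and on the Sobolev embedding needed for nodal evaluation are more careful than the paper itself, which simply asserts that $C_{\mathrm{int}}$ is shape-independent.
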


\begin{proof} 
    In order to prove the desired result, we must rely on error estimates from classical interpolation theory. These estimates are somewhat difficult to construct, as they often depend on the shape of an element (its aspect ratio, thickness, etc.), the size of an element (its min-containment ball radius, diameter, etc.), and a suitable norm of the function being interpolated. Due to the complexity of these error estimates, we omit their derivation here. Rather, we refer the interested reader to~\cite{ern2017finite} or Chapter 4 of~\cite{brenner2008mathematical} for  representative examples. In this work, we will merely expand upon these existing results. Towards this end, one may consider the following upper bound
    \begin{align}
        \label{iso_error_three_two}
        \left\| \frac{\partial v}{\partial x_i} - \mathcal{I}_K \left(\frac{\partial v}{\partial x_i} \right) \right\|_{L_{2\order}(K)} &\leq C_{\mathrm{int}} h_K \left\| \nabla\left(\frac{\partial v}{\partial x_i}\right) \right\|_{L_{2\order}(K)} \\[1.0ex]
        \nonumber &= C_{\mathrm{int}} \Delta(K) \left\| \nabla\left(\frac{\partial v}{\partial x_i}\right) \right\|_{L_{2\order}(K)},
    \end{align}
    where $C_{\mathrm{int}}$ is a constant that is \emph{independent} of the shape of the element $K$, (see~\cite{ern2004theory} Theorem 1.103,~\cite{ern2017finite} Theorem 3.3, and~\cite{ern2021finiteI} Theorem 11.13 for details). A detailed discussion of this constant also appears in Appendix~B.

    Next, upon raising  Eq.~\eqref{iso_error_three_two} to the power $2\order$, assuming that $1< \order < \infty$, and summing over $i = 1, \ldots, d$, one obtains
    \begin{align}
        %\label{iso_error_four}
        \nonumber \sum_{i=1}^{d} \left\| \frac{\partial v}{\partial x_i} - \mathcal{I}_K \left(\frac{\partial v}{\partial x_i} \right) \right\|_{L_{2\order}(K)}^{2\order} &\leq \left(C_{\mathrm{int}} \Delta(K)\right)^{2\order}   \sum_{i=1}^{d}  \left\| \nabla\left(\frac{\partial v}{\partial x_i}\right) \right\|_{L_{2\order}(K)}^{2\order}
        \\[1.0ex]
        \nonumber 
        \left(\sum_{i=1}^{d} \left\| \frac{\partial v}{\partial x_i} - \mathcal{I}_K \left(\frac{\partial v}{\partial x_i} \right) \right\|_{L_{2\order}(K)}^{2\order}\right)^{1/2\order}  &\leq C_{\mathrm{int}} \Delta(K) \left(  \sum_{i=1}^{d}  \left\| \nabla\left(\frac{\partial v}{\partial x_i}\right) \right\|_{L_{2\order}(K)}^{2\order} \right)^{1/2\order} \\[1.0ex]
        \left\| \nabla v - \mathcal{I}_{K}(\nabla v) \right\|_{L_{2\order}(K)} &\leq C_{\mathrm{int}} \Delta(K) \left\| \nabla(\nabla v) \right\|_{L_{2\order}(K)}.
    \end{align}
    Evidently, in a similar fashion
    \begin{align}
        \nonumber \left( \sum_{K \in \mathcal{T}_h}  \left\| \nabla v - \mathcal{I}_{K}(\nabla v) \right\|_{L_{2\order}(K)}^{2\order} \right)^{1/2\order} &\leq C_{\mathrm{int}} \left( \sum_{K \in \mathcal{T}_h} (\Delta(K))^{2\order}  \left\| \nabla(\nabla v) \right\|_{L_{2\order}(K)}^{2\order} \right)^{1/2\order}  \\[1.0ex]
        \label{iso_error_five}
        \left\| \nabla v - \mathcal{I}_{h}(\nabla v) \right\|_{L_{2\order}(\Omega)} &\leq C_{\mathrm{int}} \left( \max_{K \in \mathcal{T}_h} \Delta(K) \right) \left\| \nabla( \nabla v)  \right\|_{L_{2\order}(\Omega)}.
    \end{align}
    Now, we can replace $\nabla v$ with $(\nabla v - \mathcal{I}_{h}(\nabla v))$ in Eq.~\eqref{iso_upper_bound} from Lemma~\ref{iso_upper_bound_lemma}, as follows
    \begin{align}
\Psi(\nabla v - \mathcal{I}_{h}(\nabla v)) \leq  C_{\order} \Theta^{((\order-1)/2\order)} \frac{R_{\max}^{(1/\order)}}{\min_{K \in \mathcal{T}_h} \Delta(K)}  \left\| \nabla v - \mathcal{I}_{h}(\nabla v) \right\|_{L_{2\order}(\Omega)}.
        \label{upper_bound_pre}
    \end{align}
    Upon substituting Eq.~\eqref{iso_error_five} into the right hand side of Eq.~\eqref{upper_bound_pre}, and setting
    \begin{align}
        \frac{\max_{K \in \mathcal{T}_h} \Delta(K)}{\min_{K \in \mathcal{T}_h} \Delta(K)} = C_{\Delta},
    \end{align}
    in accordance with Eq.~\eqref{mesh_regular}, we obtain the desired result. Our proof holds for $1 < \order < \infty$. The case of $\order = \infty$ follows by a similar argument. 
\end{proof}

\vspace{10pt}

{\begin{theo}[Error Estimate for Gradient Interpolation]
     A measure of the error between the gradient $\nabla v$ and a piecewise polynomial interpolation of the gradient $\mathcal{I}_{h}(\nabla v)$ on the mesh $\mathcal{T}_{h}$ is given by
    \begin{align}
    \left\| \nabla v - \mathcal{I}_{h}(\nabla v) \right\|_{L_{2}(\Omega)} \leq \underbrace{\left( \frac{C_{\mathrm{int}} C_{\order}}{C_d} \right)}_{C} \underbrace{\left( \frac{C_{\Delta}  \, \Theta^{((\order-1)/2\order)} }{ C_{\Xi}} \right)}_{\tilde{C}(\mathcal{T}_h)} \underbrace{R_{\max}^{(1/\order)}}_{\mathdutchcal{h}^r} \left\| \nabla(\nabla v) \right\|_{L_{2\order}(\Omega)},
         \label{norm_error_estimate}
    \end{align}
    where $v$ is a piecewise-$W^{2,2\order}$ scalar field over $\Omega$, $1 < \order \leq \infty$, $\Theta$ is Rajan's functional, $C_{\Delta}$ and $C_{\Xi}$ are constants that depend on the mesh, $R_{\max}$ is the maximum min-containment ball radius, and $C_{\mathrm{int}}$, $C_{\order}$, and $C_d$ are constants that are independent of the mesh.
    \label{linfinity_type_one_bound}
\end{theo}

\begin{proof}
    We begin by replacing $\nabla v$ with $(\nabla v - \mathcal{I}_{h}(\nabla v))$ in Eq.~\eqref{functional_equivalence} from Lemma~\ref{functional_equivalence_lemma}, such that
    \begin{align*}
        C_{d} \, C_{\Xi} \left\| \nabla v -  \mathcal{I}_{h}(\nabla v) \right\|_{L_{2}(\Omega)} \leq \Psi(\nabla v -  \mathcal{I}_{h}(\nabla v)) \leq  C_{\Upsilon} \left\| \nabla v -  \mathcal{I}_{h}(\nabla v) \right\|_{L_{2}(\Omega)}.
    \end{align*}
    Thereafter, combining the expression above with Eq.~\eqref{functional_error_estimate} from Lemma~\ref{functional_error_estimate_lemma} yields the desired result.
\end{proof}

\subsection{$L_{\lambda}$ Error Bound}
We may now extend the results from the previous section, in order to construct a new class of error bounds.

\vspace{10pt}

\begin{theo}[Error Estimate for Gradient Interpolation]
     A measure of the error between the gradient $\nabla v$ and a piecewise polynomial interpolation of the gradient $\mathcal{I}_{h}(\nabla v)$ on the mesh $\mathcal{T}_{h}$ is given by
    \begin{align}
    \left\| \nabla v - \mathcal{I}_{h}(\nabla v) \right\|_{L_{\lambda}(\Omega)} \leq  \underbrace{2 C_{\mathrm{int}}}_{C} \underbrace{R_{\max}}_{\mathdutchcal{h}^r} \left\| \nabla(\nabla v) \right\|_{L_{\lambda}(\Omega)},
         \label{norm_error_estimate_two}
    \end{align}
    where $v$ is a piecewise-$W^{2,\lambda}$ scalar field over $\Omega$, $1 \leq \lambda \leq \infty$, $R_{\max}$ is the maximum min-containment ball radius, and $C_{\mathrm{int}}$ is a constant that is independent of the mesh.
    \label{linfinity_type_two_bound}
\end{theo}

\begin{proof}
    The proof follows immediately from Eq.~\eqref{radius_bound}, and Eq.~\eqref{iso_error_five} of Lemma~\ref{functional_error_estimate_lemma} with $\lambda$ in place of $2\order$. 
\end{proof}

\vspace{10pt}

\subsection{Interpretation} \label{gradient_interpolation_factors}

Broadly speaking, our upper bounds on the gradient interpolation error (Theorem~\ref{linfinity_type_one_bound}, Eq.~\eqref{norm_error_estimate} and Theorem~\ref{linfinity_type_two_bound}, Eq.~\eqref{norm_error_estimate_two}) can be negatively impacted by four factors:
    \begin{itemize}
        \item Small values of the constant $C_{\Xi}$. This constant becomes small when the mesh contains sliver elements. Recall that the element thickness can become arbitrarily small on a standard Delaunay mesh for $d>2$. However, on a protected Delaunay mesh, we are guaranteed a lower bound for the minimum thickness of our elements. This fact was established in Remark~\ref{protected_delaunay_remark}.

        \item Large values of Rajan's functional $\Theta$. This functional can be minimized on a standard Delaunay mesh. This fact was established by Rajan in~\cite{rajan94optimality}.
        
        \item Large values of the constant $C_{\Delta}$. This constant becomes large when the ratio of the maximum and minimum diameters of the elements becomes large. This constant can be well-controlled on meshes which do not have significant variations in scale.

        \item Large values of the maximum min-containment radius $R_{\max}$.  This radius is minimized on a standard Delaunay mesh. This fact was established by Rajan in~\cite{rajan94optimality}. In addition, we note that $R_{\max}$ is always bounded if the point sample is a net.
    \end{itemize}
    
Based on the discussion above, we can effectively reduce the upper bounds for gradient interpolation error by using  a protected Delaunay mesh, in conjunction with a limit on the difference between the maximum and minimum element diameters.

\section{Gradient Approximation: Results for Elliptic Problems} \label{gradient_approximation_section}
In this section, our objective is to construct error estimates for gradient approximation, for elliptic problems of the type given by Eq.~\eqref{elliptic_strong}. These estimates take the following form
\begin{align}
    \left\| \nabla (u - u_h) \right\|_{L_2(\Omega)} &\leq C \tilde{C}(\mathcal{T}_h) \left\| \nabla u \right\|_{L_2(\Omega)}, \label{eval_l2_estimate_one} \\[1.0ex]
    \left\| \nabla (u - u_h) \right\|_{L_2(\Omega)} &\leq C \tilde{C}(\mathcal{T}_h) \mathdutchcal{h}^r \left\| \nabla( \nabla u) \right\|_{L_{2}(\Omega)}.\label{eval_l2_estimate_two}
\end{align}
Throughout this section, we assume that $u$ is a sufficiently smooth function, defined on the mesh $\mathcal{T}_h$.

In what follows, we often refer to Eqs.~\eqref{eval_l2_estimate_one} and \eqref{eval_l2_estimate_two} as \emph{$L_2$ error bounds}. We will first establish the error bound in Eq.~\eqref{eval_l2_estimate_one}, as this result will be leveraged for developing the remaining error bound.

\subsection{$L_{2}$ Error Bounds}

In the following, we leverage Cea's Lemma and a classical interpolation result to establish an error bound.

\vspace{10pt}

\begin{theo}
    A measure of the error between the exact gradient $\nabla u$ and the approximate gradient $\nabla u_h$ on the mesh $\mathcal{T}_{h}$ is given by
    \begin{align}
         \left\| \nabla (u - u_h) \right\|_{L_{2}(\Omega)} \leq \underbrace{C_{\mathrm{int}}}_{C} \underbrace{C_{\sigma}}_{\tilde{C}(\mathcal{T}_h)} \left\| \nabla u \right\|_{L_{2}(\Omega)},
         \label{grad_eval_error_estimate_l2}
    \end{align}
    where $u$ is the exact solution to the elliptic problem (Eq.~\eqref{elliptic_strong}), $u_h$ is the approximate finite element solution (Eq.~\eqref{elliptic_fem}),  $C_{\mathrm{int}}$ is a constant that is independent of the mesh, and $C_{\sigma}$ is a constant that depends on the mesh. 
    \label{gradient_approximation_first_theorem}
\end{theo}

\begin{proof}
In order to prove the desired result, we will leverage Cea's lemma~\cite{cea1964approximation}. This lemma provides an upper bound for the error of the finite element solution $u_h$. In particular, we have that  
\begin{align}
    \left\| u - u_h \right\|_{H^{1}(\Omega)} \leq \frac{\sigma}{\tau} \left\| u - w_h \right\|_{H^{1}(\Omega)},
\end{align}
where $\sigma$ and $\tau$ are constants which were previously defined in Eq.~\eqref{coercive_upperbound}, and $w_h$ is a generic function which resides in the same space as $u_h$. In a similar fashion, one may also obtain a corollary to Cea's lemma, which omits the constants $\sigma$ and $\tau$, as follows
\begin{align}
    \nonumber \left\| \nabla (u - u_h) \right\|_{L_{2}(\Omega)}^{2} &= a_h(u - u_h, u - u_h) \\[1.0ex]
   \nonumber  & = a_h(u - u_h, u - w_h) \\[1.0ex]
    & \leq \left\| \nabla (u - u_h) \right\|_{L_{2}(\Omega)} \left\| \nabla (u - w_h) \right\|_{L_{2}(\Omega)}, \label{cea_pre}
\end{align}
and furthermore
\begin{align}
    \label{cea_one}
    \left\| \nabla (u - u_h) \right\|_{L_{2}(\Omega)} \leq \left\| \nabla (u - w_h) \right\|_{L_{2}(\Omega)},
\end{align}
where $a_h$ is the elliptic bilinear form defined in Eq.~\eqref{bilinear_form}. The second-to-last line of Eq.~\eqref{cea_pre} holds because of Galerkin orthogonality, and the last line holds because of the  Cauchy–Schwarz inequality. 

Evidently, the function $w_h$ in Eq.~\eqref{cea_one} can be replaced by any function in the finite element space. With this in mind, let us replace $w_h$ with $\mathcal{I}_{h}(u)$, the projection of the exact solution $u$ on to the finite element space. Upon setting $w_h = \mathcal{I}_{h}(u)$ in Eq.~\eqref{cea_one}, we have
\begin{align}
    \left\| \nabla (u - u_h) \right\|_{L_{2}(\Omega)} \leq \left\| \nabla (u - \mathcal{I}_{h}(u)) \right\|_{L_{2}(\Omega)}. \label{cea_two}
\end{align}
Next, in accordance with Theorem 1.103 of~\cite{ern2004theory}, we have that
\begin{align}
    \nonumber \left\| \nabla (u - \mathcal{I}_{K}(u)) \right\|_{L_{2}(K)} &\leq C_{\mathrm{int}} \sigma(K) \left\| \nabla u \right\|_{L_{2}(K)} \\[1.0ex]
    \nonumber \sum_{K \in \mathcal{T}_h} \left\| \nabla (u - \mathcal{I}_{K}(u)) \right\|_{L_{2}(K)}^{2} &\leq C_{\mathrm{int}}^{2} \sum_{K \in \mathcal{T}_h}  \sigma(K)^{2} \left\| \nabla u \right\|_{L_{2}(K)}^{2}
    \\[1.0ex]
    \nonumber \sum_{K \in \mathcal{T}_h} \left\| \nabla (u - \mathcal{I}_{K}(u)) \right\|_{L_{2}(K)}^{2} &\leq C_{\mathrm{int}}^{2} \left( \max_{K \in \mathcal{T}_h} \sigma(K)^{2} \right) \sum_{K \in \mathcal{T}_h}   \left\| \nabla u \right\|_{L_{2}(K)}^{2}
    \\[1.0ex]
    \nonumber \left(\sum_{K \in \mathcal{T}_h} \left\| \nabla (u - \mathcal{I}_{K}(u)) \right\|_{L_{2}(K)}^{2} \right)^{1/2} &\leq C_{\mathrm{int}} \left( \max_{K \in \mathcal{T}_h} \sigma(K) \right) \left(\sum_{K \in \mathcal{T}_h}   \left\| \nabla u \right\|_{L_{2}(K)}^{2}\right)^{1/2}
    \\[1.0ex]
     \left\| \nabla (u - \mathcal{I}_{h}(u)) \right\|_{L_{2}(\Omega)}  &\leq C_{\mathrm{int}} \left( \max_{K \in \mathcal{T}_h} \sigma(K) \right)   \left\| \nabla u \right\|_{L_{2}(\Omega)},
    \label{cea_three}
\end{align}
where $C_{\mathrm{int}}$ is a constant that is \emph{independent} of the mesh, $\sigma(K)$ is the mesh regularity parameter (see Eq.~\eqref{sigma_partial}), and we can set
\begin{align}
    \max_{K \in \mathcal{T}_h} \sigma(K) = C_{\sigma},
    \label{sigma_identity}
\end{align}
in accordance with Eq.~\eqref{max_regularity}. Upon substituting Eq.~\eqref{cea_three} into Eq.~\eqref{cea_two}, one obtains the desired result

\end{proof}

\begin{theo}
    A measure of the error between the exact gradient $\nabla u$ and the approximate gradient $\nabla u_h$ on the mesh $\mathcal{T}_{h}$ is given by
    \begin{align}
         \left\| \nabla (u - u_h) \right\|_{L_{2}(\Omega)} \leq \underbrace{2 C_{\mathrm{int}}}_{C} \underbrace{C_{\sigma}}_{\tilde{C}(\mathcal{T}_h)} \underbrace{R_{\max}}_{\mathdutchcal{h}^r} \left\| \nabla(\nabla u) \right\|_{L_{2}(\Omega)},
         \label{grad_eval_error_estimate_l2_two}
    \end{align}
    where $u$ is the exact solution to the elliptic problem (Eq.~\eqref{elliptic_strong}), $u_h$ is the approximate finite element solution (Eq.~\eqref{elliptic_fem}), $C_{\mathrm{int}}$ is a constant that is independent of the mesh, $R_{\max}$ is the maximum min-containment ball radius, and $C_{\sigma}$ is a constant that depends on the mesh. 
    \label{gradient_approximation_second_theorem}
\end{theo}

\begin{proof}
    In accordance with Theorem 1.103 of~\cite{ern2004theory}, we have that
    \begin{align}
    \nonumber \left\| \nabla (u - \mathcal{I}_{K}(u)) \right\|_{L_{2}(K)} &\leq C_{\mathrm{int}} h_{K} \sigma(K) \left\| \nabla(\nabla u) \right\|_{L_{2}(K)} \\[1.0ex]
      &\leq C_{\mathrm{int}} \Delta(K) \sigma(K) \left\| \nabla(\nabla u) \right\|_{L_{2}(K)}, \label{second_tier}
\end{align}
where we have set $h_K = \Delta(K)$ on the last line. 
We can further manipulate the expression in Eq.~\eqref{second_tier} as follows
\begin{align}
    \nonumber \sum_{K \in \mathcal{T}_h} \left\| \nabla (u - \mathcal{I}_{K}(u)) \right\|_{L_{2}(K)}^{2} &\leq C_{\mathrm{int}}^{2} \sum_{K \in \mathcal{T}_h} \Delta(K)^{2} \sigma(K)^{2} \left\|\nabla(\nabla u) \right\|_{L_{2}(K)}^{2}
    \\[1.0ex]
    \nonumber \sum_{K \in \mathcal{T}_h} \left\| \nabla (u - \mathcal{I}_{K}(u)) \right\|_{L_{2}(K)}^{2} &\leq C_{\mathrm{int}}^{2} \left( \max_{K \in \mathcal{T}_h} \Delta(K)^{2} \right) \left( \max_{K \in \mathcal{T}_h} \sigma(K)^{2} \right) \sum_{K \in \mathcal{T}_h}   \left\| \nabla(\nabla u) \right\|_{L_{2}(K)}^{2}
    \\[1.0ex]
    \nonumber \left(\sum_{K \in \mathcal{T}_h} \left\| \nabla (u - \mathcal{I}_{K}(u)) \right\|_{L_{2}(K)}^{2} \right)^{1/2} &\leq C_{\mathrm{int}} \left( \max_{K \in \mathcal{T}_h} \Delta(K) \right) \left( \max_{K \in \mathcal{T}_h} \sigma(K) \right) \left(\sum_{K \in \mathcal{T}_h}   \left\| \nabla(\nabla u) \right\|_{L_{2}(K)}^{2}\right)^{1/2}
    \\[1.0ex]
     \left\| \nabla (u - \mathcal{I}_{h}(u)) \right\|_{L_{2}(\Omega)}  &\leq C_{\mathrm{int}} \left( \max_{K \in \mathcal{T}_h} \Delta(K) \right) \left( \max_{K \in \mathcal{T}_h} \sigma(K) \right)   \left\| \nabla(\nabla u) \right\|_{L_{2}(\Omega)}.
    \label{cea_four}
\end{align}
Next, we can substitute Eqs.~\eqref{radius_bound}, \eqref{radius_max}, and \eqref{sigma_identity} into Eq.~\eqref{cea_four}, in order to obtain
\begin{align}
    \left\| \nabla (u - \mathcal{I}_{h}(u)) \right\|_{L_{2}(\Omega)}  &\leq 2 C_{\mathrm{int}} R_{\max} C_{\sigma}   \left\| \nabla(\nabla u) \right\|_{L_{2}(\Omega)}.
    \label{cea_five}
\end{align}
Upon substituting Eq.~\eqref{cea_five} into Eq.~\eqref{cea_two}, we obtain the desired result.
\end{proof}

\subsection{Interpretation}

Our upper bounds on the gradient approximation error (Theorem~\ref{gradient_approximation_first_theorem}, Eq.~\eqref{grad_eval_error_estimate_l2} and Theorem~\ref{gradient_approximation_second_theorem}, Eq.~\eqref{grad_eval_error_estimate_l2_two}) can be negatively impacted by two factors:
    \begin{itemize}
        \item Large values of the constant $C_{\sigma}$. This constant becomes large when the mesh contains sliver elements. On a protected Delaunay mesh, we are guaranteed an upper bound for $C_{\sigma}$. This fact was established in Remark~\ref{protected_delaunay_two_remark}.

        \item Large values of the maximum min-containment radius $R_{\max}$.  This radius is minimized on a standard Delaunay mesh~\cite{rajan94optimality}.
    \end{itemize}

\section{Vector-Field Interpolation: Results for General Problems} \label{vector_interpolation_section}

Let us briefly shift our attention to the task of interpolating a vector-valued function $\bm{f} = \bm{f}(\bm{x})$ defined on the domain $\Omega$. In this case, we can define the following function:

\vspace{10pt}

\begin{definition}[Edge Functional]
Consider the following, non-negative functional of $\bm{f}$ over the mesh~$\mathcal{T}_{h}$
\begin{align}
    \Psi(\bm{f}) \equiv  \left[\sum_{K \in \mathcal{T}_{h}} \frac{1}{h_{K}^{2}} \int_{K} \sum_{i=1}^{d+1} \sum_{j=1}^{i-1}\left(\mathrm{abs}(\bm{f})\cdot \mathrm{abs}(\bm{p}_{K,ij}) \right)^{2} dV \right]^{1/2},
\end{align}
where we assume that $\bm{f}$ is an $L_2$-vector field on $\Omega$, and $h_K$ is a characteristic length scale associated with each $K$. 
\label{edge_functional}
\end{definition}

With this definition in mind, we note that Lemmas~\ref{functional_equivalence_lemma}--\ref{functional_error_estimate_lemma} hold with $\bm{f}$ in place of $\nabla v$. In addition, the following theorem holds.

\vspace{10pt}

\begin{theo}[Error Estimate for Vector Interpolation]
     A measure of the error between the vector-valued function, $\bm{f}$, and a piecewise polynomial interpolation of the vector, $\mathcal{I}_{h}(\bm{f})$, on the mesh $\mathcal{T}_{h}$ is given by
    \begin{align}
    \left\| \bm{f} - \mathcal{I}_{h}(\bm{f}) \right\|_{L_{2}(\Omega)} \leq \underbrace{\left(\frac{C_{\mathrm{int}} C_{\order}}{C_d} \right)}_{C} \underbrace{\left(\frac{C_{\Delta}  \Theta^{((\order-1)/2\order)}}{C_{\Xi}} \right)}_{\tilde{C}(\mathcal{T}_h)} \underbrace{R_{\max}^{(1/\order)}}_{\mathdutchcal{h}^r} \left\| \nabla \bm{f} \right\|_{L_{2\order}(\Omega)},
         \label{vector_norm_error_estimate}
    \end{align}
    where $\bm{f}$ is a piecewise-$W^{1,2\order}$ vector field  over $\Omega$, $1 < \order \leq \infty$, $\Theta$ is a functional that depends on the mesh, $C_{\Delta}$ and $C_{\Xi}$ are constants that depend on the mesh, $R_{\max}$ is the maximum min-containment ball radius, and $C_{\mathrm{int}}$, $C_{\order}$, and $C_d$ are constants that are independent of the mesh.
    \label{vector_theorem}
\end{theo}

\begin{proof}
    The proof is identical to the proof of Theorem~\ref{linfinity_type_one_bound}, with $\bm{f}$ in place of $\nabla v$.
\end{proof}

\vspace{10pt}

\begin{theo}[Error Estimate for Vector Interpolation]
     A measure of the error between the vector-valued function, $\bm{f}$, and a piecewise polynomial interpolation of the vector, $\mathcal{I}_{h}(\bm{f})$, on the mesh $\mathcal{T}_{h}$ is given by
    \begin{align}
    \left\| \bm{f} - \mathcal{I}_{h}(\bm{f}) \right\|_{L_{\lambda}(\Omega)} \leq \underbrace{2 C_{\mathrm{int}}}_{C} \underbrace{R_{\max}}_{\mathdutchcal{h}^r} \left\| \nabla\bm{f} \right\|_{L_{\lambda}(\Omega)},
         \label{vector_norm_error_estimate_two}
    \end{align}
    where $\bm{f}$ is a piecewise-$W^{1,\lambda}$ vector field over $\Omega$, $1 \leq \lambda \leq \infty$, $R_{\max}$ is the maximum min-containment ball radius, and $C_{\mathrm{int}}$ is a constant that is independent of the mesh.
    \label{vector_theorem_two}
\end{theo}

\begin{proof}
    The proof follows immediately from Theorem~\ref{linfinity_type_two_bound} with $\bm{f}$ in place of $\nabla v$. 
\end{proof}

\vspace{10pt}

\begin{remark}
    Our upper bounds on the vector-field interpolation error (Theorem~\ref{vector_theorem}, Eq.~\eqref{vector_norm_error_estimate} and Theorem~\ref{vector_theorem_two}, Eq.~\eqref{vector_norm_error_estimate_two}) can be negatively impacted by the factors discussed in Section~\ref{gradient_interpolation_factors}.
\end{remark}

\vspace{12pt}

\begin{remark}
The key benefit of the results in Sections~\ref{gradient_interpolation_section}, \ref{gradient_approximation_section}, and \ref{vector_interpolation_section}, is that they provide  practitioners with mathematically-supported ideas for reducing the numerical errors in their simulations of PDEs. In particular, the error estimates in this work indicate that protected Delaunay meshes should be seriously considered if one employs finite element methods to interpolate gradients or vector fields. In addition, one should strongly consider protected Delaunay meshes in conjunction with finite element methods in order to approximate solutions to elliptic problems. We note that our most general results hold for PDEs with exact solutions that are piecewise-$W^{2,p}$ for suitable $p$, or vector fields that are piecewise-$W^{1,m}$ for suitable $m$. If the PDE in question has a significantly less regular solution or vector field, then other error mitigation strategies will likely be required.
\end{remark}

\section{Conclusion} \label{conclusion_section}

In this work, we developed \emph{geometrically explicit} error estimates for gradient interpolation and approximation. Furthermore, we extended our results and developed error estimates for vector-valued functions.  Our error estimates hold in any number of dimensions, and explicitly depend on the geometric characteristics of the mesh: namely, the thickness and regularity parameters associated with elements in the mesh. This is an important point, as oftentimes, the connection between mesh parameters and the quality of interpolation/approximation is not clearly articulated in the literature. 

In addition to explicitly showing the connection between interpolation/approximation accuracy and mesh regularity, we also demonstrated the value of protected Delaunay meshes. In particular, we showed the many important mesh parameters are effectively controlled (and sometimes optimized) on protected Delaunay meshes in $\mathbb{R}^d$. Based on our results, we have successfully shown that high-quality, high-order, piecewise polynomial interpolation and approximation can (in principle) be achieved on protected Delaunay meshes.

We note that our error estimates for gradient and vector-field interpolation are completely general, and apply to any problems with sufficiently smooth gradients or vector fields. In addition, our error estimates for gradient approximation apply to elliptic problems of the type discussed in Section~\ref{elliptic_example}. 

Finally, we note that our error estimates are not limited to the context of protected Delaunay meshes, but can be successfully applied to other simplicial meshes with similar properties.

%, we have successfully shown that high-quality, high-order, piecewise polynomial gradient interpolation can (in principle) be achieved on a protected Delaunay mesh.

%In summary, we have constructed the first mathematical results which explicitly govern the error of high-order, piecewise polynomial interpolation on protected Delaunay meshes. In addition, we have identified one of the few practical applications of Rajan's optimality results~\cite{rajan94optimality}.

\backmatter

\section*{Declarations}

\begin{itemize}
\item Funding: David Williams received funding from the United States Naval Research Laboratory (NRL) under grant number N00173-22-2-C008. In turn, the NRL grant itself was funded by Steven Martens, Program Officer for the Power, Propulsion and Thermal Management Program, Code 35, in the United States Office of Naval Research, and by Saikat Dey, Acoustics Division Theoretical and Numerical Techniques Section Head, NRL. Mathijs Wintraecken was funded by the French National Research Agency (ANR) under grant StratMesh and the welcome package from IDEX of the Universit{\'e} C{\^o}te d'Azur. 

\item Conflict of interest/Competing interests: The authors declare that they have no known competing financial interests or personal relationships that could have appeared to influence the work reported in this paper.

\item Ethics approval and consent to participate: Not applicable.
\item Consent for publication: Distribution Statement A. Approved for public release. Distribution is unlimited.
\item Data availability: Not applicable.
\item Materials availability: Not applicable.
\item Code availability: Not applicable.
\item Author contribution:  David Williams contributed to methodology, writing, editing, and funding procurement. Mathijs Wintraecken contributed to methodology, writing, editing, and funding procurement. 
\end{itemize}

% \noindent
% If any of the sections are not relevant to your manuscript, please include the heading and write `Not applicable' for that section. 

%%===================================================%%
%% For presentation purpose, we have included        %%
%% \bigskip command. Please ignore this.             %%
%%===================================================%%
%\bigskip
%\begin{flushleft}%
% Editorial Policies for:

% \bigskip\noindent
% Springer journals and proceedings: \url{https://www.springer.com/gp/editorial-policies}

% \bigskip\noindent
% Nature Portfolio journals: \url{https://www.nature.com/nature-research/editorial-policies}

% \bigskip\noindent
% \textit{Scientific Reports}: \url{https://www.nature.com/srep/journal-policies/editorial-policies}

% \bigskip\noindent
% BMC journals: \url{https://www.biomedcentral.com/getpublished/editorial-policies}
% \end{flushleft}

\pagebreak
\clearpage

\begin{appendices}

\section{Proof of Eq.~\eqref{simple_volume_two}}\label{secA1}

We recall that the Levi-Civita symbol $\epsilon_{a_1,a_2,\dots, a_d}$ is completely antisymmetric, that is
\begin{align*}
\epsilon_{a_1, a_2, \dots a_d} 
=\begin{cases}
    +1 & \text{if $(a_1, a_2, \dots, a_d)$ is an even permutation of $(1, 2, \dots, d)$}, \\
    -1 & \text{if $(a_1, a_2, \dots, a_d)$ is an odd permutation of $(1, 2, \dots, d)$}, \\
    0 & \text{otherwise}.
\end{cases}
\end{align*}
In a similar fashion
\begin{align*}
\epsilon_{\substack{a_1, a_2, \ldots, \widehat{a_j}, \ldots, a_d\\ a_i \neq m}} 
=\begin{cases}
    +1 & \text{if $(a_1, a_2, \ldots, \widehat{a_j}, \ldots, a_d)$ is an even permutation of $(1, 2, \dots, m-1, m+1, \dots, d)$}, \\
    -1 & \text{if $(a_1, a_2, \ldots, \widehat{a_j}, \ldots, a_d)$ is an odd permutation of $(1, 2, \dots, m-1, m+1, \dots, d)$}, \\
    0 & \text{otherwise},
\end{cases}
\end{align*}
where $1 \leq j \leq d$ and the hat symbol $\widehat{\cdot}$ denotes omission.
By the definition of the determinant, we have that
\begin{align}
\label{cross_product_determinant_prev}
\det( [\bm{q}_1, \bm{q}_2, \dots, \bm{q}_d]) &= \det \left( \begin{bmatrix}
    q_{1}^{1} & q_{2}^{1} & \cdots & q_{d}^{1} \\[1.0ex]
    q_{1}^{2} & q_{2}^{2} & \cdots & q_{d}^{2} \\[1.0ex]
    & & \vdots & \\
    q_{1}^{m} & q_{2}^{m} & \cdots & q_{d}^{m} \\[1.0ex]
    & & \vdots & \\
    q_{1}^{d} & q_{2}^{d} & \cdots & q_{d}^{d}
\end{bmatrix} \right) \\[1.0ex]
\nonumber &= \sum_{a_1, a_2, \ldots, a_d} \epsilon_{a_1, a_2, \dots, a_d}  q_{1}^{a_1} q_{2}^{a_2} \dots q_{d}^{a_d},
\end{align}
or equivalently, upon moving the $m$th row to the top of the matrix
\begin{align}
\label{cross_product_determinant}
\det( [\bm{q}_1, \bm{q}_2, \dots, \bm{q}_d]) &= (-1)^{m+1} \det \left( \begin{bmatrix}
    q_{1}^{m} & q_{2}^{m} & \cdots & q_{d}^{m} \\[1.0ex]
    q_{1}^{1} & q_{2}^{1} & \cdots & q_{d}^{1} \\[1.0ex]
    & & \vdots & \\
q_{1}^{m-1} & q_{2}^{m-1} & \cdots & q_{d}^{m-1} \\[1.0ex]
    q_{1}^{m+1} & q_{2}^{m+1} & \cdots & q_{d}^{m+1} \\[1.0ex]
    & & \vdots & \\
    q_{1}^{d} & q_{2}^{d} & \cdots & q_{d}^{d}
\end{bmatrix} \right) \\[1.0ex]
\nonumber &=  (-1)^{m+1} \left( q_{1}^{m} \det \left( \begin{bmatrix}
    \widehat{q_{1}^{1}} & q_{2}^{1} & \cdots & q_{d}^{1} \\[1.0ex]
    & & \vdots & \\
\widehat{q_{1}^{m-1}} & q_{2}^{m-1} & \cdots & q_{d}^{m-1} \\[1.0ex]
    \widehat{q_{1}^{m+1}} & q_{2}^{m+1} & \cdots & q_{d}^{m+1} \\[1.0ex]
    & & \vdots & \\
    \widehat{q_{1}^{d}} & q_{2}^{d} & \cdots & q_{d}^{d}
\end{bmatrix}  \right) \right. \\[1.0ex]
\nonumber &- q_{2}^{m} \det \left( \begin{bmatrix}
    q_{1}^{1} & \widehat{q_{2}^{1}} & \cdots & q_{d}^{1} \\[1.0ex]
    & & \vdots & \\
q_{1}^{m-1} & \widehat{q_{2}^{m-1}} & \cdots & q_{d}^{m-1} \\[1.0ex]
    q_{1}^{m+1} & \widehat{q_{2}^{m+1}} & \cdots & q_{d}^{m+1} \\[1.0ex]
    & & \vdots & \\
    q_{1}^{d} & \widehat{q_{2}^{d}} & \cdots & q_{d}^{d}
\end{bmatrix} \right) + \cdots \\[1.0ex] 
\nonumber &\left. +(-1)^{d+1} q_{d}^{m} \det \left( \begin{bmatrix}
    q_{1}^{1} & q_{2}^{1} & \cdots & \widehat{q_{d}^{1}} \\[1.0ex]
    & & \vdots & \\
q_{1}^{m-1} & q_{2}^{m-1} & \cdots & \widehat{q_{d}^{m-1}} \\[1.0ex]
    q_{1}^{m+1} & q_{2}^{m+1} & \cdots & \widehat{q_{d}^{m+1}} \\[1.0ex]
    & & \vdots & \\
    q_{1}^{d} & q_{2}^{d} & \cdots & \widehat{q_{d}^{d}},
\end{bmatrix} \right) \right)  \\[1.0ex]
\nonumber
&= \left(-1\right)^{m+1} \sum_{j=1}^{d} \left((-1)^{j+1} q_{j}^{m} \sum_{\substack{a_1, a_2, \ldots, \widehat{a_j}, \ldots, a_d\\ a_i \neq m}} \epsilon_{a_1, a_2, \dots, \widehat{a_j} ,\dots, a_d}  q_{1}^{a_1} q_{2}^{a_2} \dots \widehat{q_{j}^{a_j}} \dots q_{d}^{a_d}\right),
\end{align}
where $[\bm{q}_1, \bm{q}_2, \dots, \bm{q}_d]$ denotes the matrix whose columns are $\bm{q}_1, \bm{q}_2, \dots, \bm{q}_d$, and each $q_i^{a_i}$ denotes the $a_i$-th entry of $\bm{q}_i \in \mathbb{R}^d$, (where evidently, $1 \leq a_i \leq d)$. 
%Here, we use the Einstein summation convention, that is we sum over repeated indices. 
Thanks to the standard interpretation of the determinant as an (orientated) volume of a $d$-parallelepiped spanned by the vectors $\bm{q}_1, \bm{q}_{2}, \dots, \bm{q}_d$, we have that
\begin{align}
d! |K| &= |\det( [\bm{q}_1, \bm{q}_2, \dots, \bm{q}_d]) |, \label{volume_identity}
\end{align}
where $|K|$ denotes the volume of the simplex whose edges emanating from $\bm{0}$ are $\bm{q}_1, \bm{q}_2, \dots, \bm{q}_d$. The factor of $d!$ in Eq.~\eqref{volume_identity} originates from the observation that a $d$-cube can be subdivided into $d!$ simplices that all have the same volume. 

Next, we can introduce a normal vector $\bm{n}(r)$ opposite $\bm{q}_r$ where $1 \leq r \leq d$, such that
\begin{align*}
    \bm{n}(r) &= (-1)^{r} \mathrm{det} \left( \begin{bmatrix}
        \bm{e}_{1} & \bm{e}_{2} & \dots & \bm{e}_{m} & \ldots & \bm{e}_{d} \\[1.0ex]
        q_{1}^{1} & q_{1}^{2} & \dots & q_{1}^{m} & \dots & q_{1}^{d} \\[1.0ex]
        q_{2}^{1} & q_{2}^{2} & \dots & q_{2}^{m} & \dots & q_{2}^{d} \\[1.0ex]
        & &\vdots & & & \\[1.0ex]
        \widehat{q_{r}^{1}} & \widehat{q_{r}^{2}} & \dots & \widehat{q_{r}^{m}} & \dots & \widehat{q_{r}^{d}} \\[1.0ex]
        & &\vdots & & & \\[1.0ex]
        q_{d}^{1} & q_{d}^{2} & \dots & q_{d}^{m} & \dots & q_{d}^{d}
    \end{bmatrix} \right),
    \\[1.0ex]
    &=(-1)^{r} \left( \bm{e}_{1} \det \left( \begin{bmatrix} q_{1}^{2} & q_{2}^{2} & \cdots & \widehat{q_{r}^{2}} & \cdots & q_{d}^{2} \\[1.0ex]
    q_{1}^{3} & q_{2}^{3} & \cdots & \widehat{q_{r}^{3}} & \cdots & q_{d}^{3} \\[1.0ex]
    & & \vdots & & \\[1.0ex]
    q_{1}^{m} & q_{2}^{m} & \cdots & \widehat{q_{r}^{m}} & \cdots & q_{d}^{m} \\[1.0ex]
    & & \vdots & & \\[1.0ex]
    q_{1}^{d} & q_{2}^{d} & \cdots & \widehat{q_{r}^{d}} & \cdots & q_{d}^{d}
    \end{bmatrix}  \right) \right. \\[1.0ex] &- \bm{e}_{2} \det \left( \begin{bmatrix} q_{1}^{1} & q_{2}^{1} & \cdots & \widehat{q_{r}^{1}} & \cdots & q_{d}^{1} \\[1.0ex]
    q_{1}^{3} & q_{2}^{3} & \cdots & \widehat{q_{r}^{3}} & \cdots & q_{d}^{3} \\[1.0ex]
    & & \vdots & & \\[1.0ex]
    q_{1}^{m} & q_{2}^{m} & \cdots & \widehat{q_{r}^{m}} & \cdots & q_{d}^{m} \\[1.0ex]
    & & \vdots & & \\[1.0ex]
    q_{1}^{d} & q_{2}^{d} & \cdots & \widehat{q_{r}^{d}} & \cdots & q_{d}^{d}
    \end{bmatrix}  \right) + \cdots \\[1.0ex]
    &+ (-1)^{m+1} \bm{e}_{m} \det \left( \begin{bmatrix} q_{1}^{1} & q_{2}^{1} & \cdots & \widehat{q_{r}^{1}} & \cdots & q_{d}^{1} \\[1.0ex]
    & & \vdots & & \\[1.0ex]
    q_{1}^{m-1} & q_{2}^{m-1} & \cdots & \widehat{q_{r}^{m-1}} & \cdots & q_{d}^{m-1} \\[1.0ex]
    q_{1}^{m+1} & q_{2}^{m+1} & \cdots & \widehat{q_{r}^{m+1}} & \cdots & q_{d}^{m+1} \\[1.0ex]
    & & \vdots & & \\[1.0ex]
    q_{1}^{d} & q_{2}^{d} & \cdots & \widehat{q_{r}^{d}} & \cdots & q_{d}^{d}
    \end{bmatrix}  \right) + \cdots \\[1.0ex]
    &\left. +(-1)^{d+1} \bm{e}_{d} \det \left( \begin{bmatrix} q_{1}^{1} & q_{2}^{1} & \cdots & \widehat{q_{r}^{1}} & \cdots & q_{d}^{1} \\[1.0ex]
    q_{1}^{2} & q_{2}^{2} & \cdots & \widehat{q_{r}^{2}} & \cdots & q_{d}^{2} \\[1.0ex]
    & & \vdots & & \\[1.0ex]
    q_{1}^{m} & q_{2}^{m} & \cdots & \widehat{q_{r}^{m}} & \cdots & q_{d}^{m} \\[1.0ex]
    & & \vdots & & \\[1.0ex]
    q_{1}^{d-1} & q_{2}^{d-1} & \cdots & \widehat{q_{r}^{d-1}} & \cdots & q_{d}^{d-1}
    \end{bmatrix}  \right) \right),
\end{align*}
where $\bm{n}(r)$ is a vector whose magnitude equals the volume of the $(d-1)$-parallelepiped spanned by $\bm{q}_1, \bm{q}_2, \ldots, \widehat{\bm{q}_r}, \ldots, \bm{q}_d$, and each $\bm{e}_{m} \in \mathbb{R}^d$ is a vector with 1 in the $m$th entry and 0's elsewhere. Each component of $\bm{n}(r)$ can be written as follows
\begin{align}
n(r)^{m} = (-1)^{m+1+r}  \sum_{\substack{a_1, a_2, \ldots, \widehat{a_r}, \ldots, a_d\\ a_i \neq m}} \epsilon_{a_1, a_2,\dots, \widehat{a_r}, \dots, a_d}  q_{1}^{a_1} q_{2}^{a_2} \dots \widehat{q_{r}^{a_r}} \dots q_d^{a_d}. \label{normal_vector_pre}
\end{align}
Eq.~\eqref{normal_vector_pre} holds because $\bm{n}(r)$ is orthogonal by construction to all $\bm{q}_i$ and
\begin{align*}
| \bm{n}(r)| &=\frac{|\bm{n}(r)| ^2 }{| \bm{n}(r)|} = \sum_{m=1}^{d} \left(\frac{n(r)^{m} \, n(r)^{m}}{| \bm{n}(r)|} \right) \\
&= \sum_{m=1}^{d} \left( \sum_{\substack{a_1, a_2, \ldots, \widehat{a_r}, \ldots, a_d\\ a_i \neq m}} \epsilon_{a_1, a_2, \dots, a_{r-1},  m, a_{r+1},  \dots, a_d} q_1^{a_1} q_2^{a_2} \dots q_{r-1}^{a_{r-1}} \frac{n(r)^{m}}{| \bm{n}(r)|} q_{r+1}^{a_{r+1} } \dots q_{d}^{a_d}\right),
\end{align*}
is the volume of the $d$-parallelepiped spanned by $\bm{q}_1, \bm{q}_2, \ldots, \bm{q}_{r-1}, \frac{\bm{n}(r) }{| \bm{n}(r)|}, \bm{q}_{r+1}, \ldots, \bm{q}_{d}$, which is in turn equal to the volume of the $d$-parallelepiped spanned by $\bm{q}_1, \bm{q}_2, \ldots, \bm{q}_{r-1}, \bm{q}_{r+1}, \ldots, \bm{q}_{d}$ by orthogonality. It follows that
\begin{align}
| \bm{n}(r)| = (d-1)! |\mathcal{F}_{r}|,
\end{align}
where $\mathcal{F}_r$ is the facet opposite $\bm{q}_r$.
%I believe that somehow we should make clear that this construction is not new. I thought that some reference to Hodge duality would make that clear, it predates the previous reference by quite a bit. I don't particularly care about for Hodge theory, but somehow. DW: this is an excellent point. I agree.  
We point out that this construction is not new (see, for example~\cite{cho1991generalized}). Indeed, for the reader that is familiar with Hodge theory, we note that the vectors $\bm{n}(r)$ are the dual vectors of the Hodge dual (the image of the Hodge star operator) of the exterior product of $\bm{q}_{1}^{*}, \bm{q}_{2}^{*}, \dots, \widehat{\bm{q}_{r}^{*}} ,\dots , \bm{q}_{d}^{*}$, where $\bm{w}^{*}$ denotes the dual of $\bm{w}$, (i.e.~the covector of $\bm{w}$). A pedagogical introduction to the concept of Hodge duals appears in Section 7.2 of~\cite{abraham2012manifolds}.
%[I am not sure that this is the best reference it is low dimensional, but it was surprisingly hard to find a reference for something that is rather classical, I've written a colleague to ask if he has a good reference. DW: Sounds good. I will let you know if I think of a better reference.]

Let us return our attention to the vectors themselves: $\bm{q}_1, \bm{q}_2, \ldots, \widehat{\bm{q}_r}, \ldots, \bm{q}_d$. These vectors can be defined so that
\begin{align}
     \bm{q}_1 &= \bm{p}_{K,1}-\bm{p}_{K,\ell} = \bm{p}_{K,\ell 1}, \label{q_expanded} \\
    \nonumber \bm{q}_2 &= \bm{p}_{K,2}-\bm{p}_{K,\ell} = \bm{p}_{K,\ell 2}, \\
    \nonumber &\vdots \\
    \nonumber \widehat{\bm{q}_r} &= \widehat{\bm{p}_{K,r}-\bm{p}_{K,\ell}} = \widehat{\bm{p}_{K,\ell r}} \\
    \nonumber &\vdots \\
    \nonumber \bm{q}_{\ell-1} & = \bm{p}_{K,\ell-1}-\bm{p}_{K,\ell} = \bm{p}_{K,\ell (\ell-1)},\\
    \nonumber \bm{q}_{\ell} & = \bm{p}_{K,\ell+1}-\bm{p}_{K,\ell} = \bm{p}_{K,\ell (\ell+1)}, \\
    \nonumber &\vdots \\
   \nonumber \bm{q}_d &= \bm{p}_{K,d+1}-\bm{p}_{K,\ell} = \bm{p}_{K,\ell (d+1)}. 
\end{align}
Here, we have shifted each vertex by $\bm{p}_{K,\ell}$. Evidently, this ensures that the vertex $\bm{p}_{K,\ell}$ itself is shifted to the origin. As a result, we have that
\begin{align}
| \bm{n}(r)| = (d-1)! |\mathcal{F}_{K,r}|,
    \label{facet_volume_identity}
\end{align}
where $\mathcal{F}_{K,r}$ is the facet opposite the vertex $\bm{p}_{K,r}$. 

Next, in accordance with Eqs.~\eqref{cross_product_determinant} and \eqref{volume_identity}, we have that
\begin{align}
    \nonumber |K| &= \frac{1}{d!} |\det( [\bm{q}_1, \bm{q}_2, \dots, \bm{q}_d]) | \\[1.0ex]
    \nonumber &= \frac{1}{d!} \left| (-1)^{m+1} \sum_{j=1}^{d} \left((-1)^{j+1} q_{j}^{m} \sum_{\substack{a_1, a_2, \ldots, \widehat{a_j}, \ldots, a_d\\ a_i \neq m}} \epsilon_{a_1, a_2, \dots, \widehat{a_j} ,\dots, a_d}  q_{1}^{a_1} q_{2}^{a_2} \dots \widehat{q_{j}^{a_j}} \dots q_{d}^{a_d}\right)\right| \\[1.0ex]
    \nonumber &\leq \frac{1}{d!} \sum_{j=1}^{d} \left( \left| q_{j}^{m} \right| \left| \sum_{\substack{a_1, a_2, \ldots, \widehat{a_j}, \ldots, a_d\\ a_i \neq m}} \epsilon_{a_1, a_2, \dots, \widehat{a_j} ,\dots, a_d}  q_{1}^{a_1} q_{2}^{a_2} \dots \widehat{q_{j}^{a_j}} \dots q_{d}^{a_d}\right|\right).
\end{align}
Furthermore, we can rewrite the equation above in accordance with Eqs.~\eqref{normal_vector_pre} and \eqref{q_expanded}, replacing $q_{j}^{m}$ with $p_{K,\ell r}^{m}$, as follows
\begin{align}
    \nonumber \left| K \right| &\leq \frac{1}{d!} \sum_{\substack{r=1\\ r\neq \ell}}^{d+1} \left( \left| p_{K,\ell r}^{m} \right| \left|\sum_{\substack{a_1, a_2, \ldots, \widehat{a_r}, \ldots, a_d\\ a_i \neq m}} \epsilon_{a_1, a_2, \dots, \widehat{a_r}, \dots, a_d}  p_{K,\ell 1}^{a_1} p_{K,\ell 2}^{a_2} \dots \widehat{p_{K,\ell r}^{a_r}} \dots p_{K,\ell (\ell-1)}^{a_{(\ell-1)}} p_{K,\ell (\ell+1)}^{a_{\ell}} \dots p_{K,\ell (d+1)}^{a_d} \right| \right)  \\[1.0ex]
    &=  \frac{1}{d!} \sum_{\substack{r=1\\ r\neq \ell}}^{d+1} \left( \left| p_{K,\ell r}^{m} \right| \left| n(r)^{m} \right| \right). \label{volume_identity_expanded_one}
\end{align}
Upon summing Eq.~\eqref{volume_identity_expanded_one} over all $\ell$, we obtain
\begin{align}
        \label{volume_identity_expanded_two} (d+1)\left| K \right| &\leq \frac{1}{d!} \sum_{\ell = 1}^{d+1} \sum_{\substack{r=1\\ r\neq \ell}}^{d+1} \left( \left| p_{K,\ell r}^{m} \right| \left| n(r)^{m} \right| \right) \\[1.0ex]
        \nonumber &\leq \frac{1}{d!} \sum_{\ell = 1}^{d+1} \sum_{\substack{r=1 \\ r\neq \ell}}^{d+1} \left( \left| p_{K,\ell r}^{m} \right| \left| \bm{n}(r) \right| \right) \\[1.0ex]
        \nonumber &= \frac{1}{d} \sum_{\ell = 1}^{d+1} \sum_{\substack{r=1 \\ r\neq \ell}}^{d+1} \left( \left| p_{K,\ell r}^{m} \right| \left| \mathcal{F}_{K,r} \right|  \right) \\[1.0ex]
        \nonumber &\leq \frac{1}{d} \left( \max_{s} |\mathcal{F}_{K,s}| \right) \sum_{\ell = 1}^{d+1} \sum_{\substack{r=1 \\ r\neq \ell}}^{d+1} \left| p_{K,\ell r}^{m} \right|, \qquad s \in [1, d+1],
\end{align}
where we have used Eq.~\eqref{facet_volume_identity} on the second-to-last line of Eq.~\eqref{volume_identity_expanded_two}.

We note that the following identity holds by a symmetry argument
\begin{align}
    \label{edge_identity}
    \frac{1}{2} \sum_{\ell = 1}^{d+1} \sum_{\substack{r=1 \\ r\neq \ell}}^{d+1}  \left| p_{K,\ell r}^{m} \right| =  \sum_{i=1}^{d+1} \sum_{j=1}^{i-1}  \left| p_{K,ij}^{m} \right|.
\end{align}
Substituting Eq.~\eqref{edge_identity} into Eq.~\eqref{volume_identity_expanded_two} yields
\begin{align}
    \frac{d(d+1)}{2} \frac{\left| K \right|}{ \left( \max_{s} |\mathcal{F}_{K,s}| \right)} \leq \sum_{i=1}^{d+1} \sum_{j=1}^{i-1}  \left| p_{K,ij}^{m} \right|.
    \label{first_clean_bound}
\end{align}
The ratio $|K|/\max_{s}|\mathcal{F}_{K,s}|$ on the left hand side (above) can be rewritten in terms of the minimum altitude of the simplex. In particular
\begin{align}
    \frac{\left| K \right|}{ \left( \max_{s} |\mathcal{F}_{K,s}| \right)} = \frac{\min_{s} \left[\mathrm{dist}\left(\bm{p}_{K,s}, \mathrm{aff}(\mathcal{F}_{K,s})\right)\right]}{d}, 
    \label{elevation_identity}
\end{align}
where we recall that the function $\mathrm{dist}(\cdot,\cdot)$ returns the shortest distance between the vertex $\bm{p}_{K,s}$ and the affine hull of its opposite facet, $\mathrm{aff}(\mathcal{F}_{K,s})$. We can substitute Eq.~\eqref{elevation_identity} into Eq.~\eqref{first_clean_bound} as follows
\begin{align}
    \frac{(d+1)}{2} \min_{s} \left[\mathrm{dist}\left(\bm{p}_{K,s}, \mathrm{aff}(\mathcal{F}_{K,s})\right)\right] \leq \sum_{i=1}^{d+1} \sum_{j=1}^{i-1}  \left| p_{K,ij}^{m} \right|.
\end{align}
Next, upon squaring both sides of the expression above
\begin{align}
    \left(\frac{(d+1)}{2} \min_{s} \left[\mathrm{dist}\left(\bm{p}_{K,s}, \mathrm{aff}(\mathcal{F}_{K,s})\right)\right]\right)^2 \leq \left(\sum_{i=1}^{d+1} \sum_{j=1}^{i-1}  \left| p_{K,ij}^{m} \right|\right)^2 \leq \frac{d(d+1)}{2} \sum_{i=1}^{d+1} \sum_{j=1}^{i-1}  \left( p_{K,ij}^{m} \right)^2. \label{simple_volume_one}
\end{align}
Here, we have leveraged the root-mean-square-arithmetic-mean inequality in order to reformulate the right hand side.

Equivalently, upon simplifying Eq.~\eqref{simple_volume_one}, we obtain the desired lower bound
\begin{align}
   \frac{d+1}{2d} \left( \min_{s} \left[\mathrm{dist}\left(\bm{p}_{K,s}, \mathrm{aff}(\mathcal{F}_{K,s})\right)\right] \right)^2 \leq  \sum_{i=1}^{d+1} \sum_{j=1}^{i-1}  \left( p_{K,ij}^{m} \right)^2. \label{simple_volume_two_alt}
\end{align}

\section{The Interpolation Constant}\label{secB1}

In accordance with Eq.~(11.14) of~\cite{ern2021finiteI}, we have the following error estimate for interpolation
\begin{align}
    \left|v - \mathcal{I}_{K}(v) \right|_{W^{m,p}(K;\mathbb{R}^q)} \leq C \tilde{C}(\mathcal{T}_h) h_{K}^{r-m} \left| v \right|_{W^{r,p}(K; \mathbb{R}^q)},
\end{align}
where $v \in W^{m,p}(K;\mathbb{R}^q)$, $p \in [1, \infty]$, $q \in \mathbb{N}$ and $1\leq q < \infty$, $m \in \mathbb{N}$ and $0 \leq m \leq r$, $r \in \mathbb{N}$ and $l \leq r \leq k + 1$, and $l, k \in \mathbb{N}$. Here, $k$ is the polynomial order of the finite element approximation space. We performed a very careful analysis of Lemma 11.1, Lemma 11.7, Lemma 11.9, and Theorem 11.13 in~\cite{ern2021finiteI}, and determined that
\begin{align}
    C &= C_{\text{int}} = c_1 c_2 c_3 c_4 c_5 \frac{\Delta(\widehat{K})^m}{\rho(\widehat{K})^r}, \\[1.0ex]
    \tilde{C}(\mathcal{T}_h) &= \left\| \mathbb{A}_K \right\|_{\ell^2} \left\| \mathbb{A}_K^{-1} \right\|_{\ell^2} \sigma(K)^m =  \left\| \mathbb{A}_K \right\|_{\ell^2} \left\| \mathbb{A}_K^{-1} \right\|_{\ell^2} \left(\frac{\Delta(K)}{\rho(K)} \right)^m,
\end{align}
where, $c_1$, $\ldots$, $c_5$ are constants, $\widehat{K}$ denotes the reference element, $\Delta(\widehat{K})$ is the longest edge length of the reference element, $\rho(\widehat{K})$ is the diameter of the inscribed ball within the reference element, $\mathbb{A}_K \in \mathbb{R}^{q \times q}$ is a transformation matrix, and $\left\| \cdot \right\|_{\ell^2}$ is the Euclidean norm of a matrix.

If we assume that our interpolant $\mathcal{I}_K(\cdot)$ is the Lagrange interpolant, then it is also reasonable to assume (in accordance with Section 11.5.1 of~\cite{ern2021finiteI}) that $c_5 = 1$ and $\mathbb{A}_K = \mathbb{I}$. As a result, we find that
\begin{align}
    C &= C_{\text{int}} = c_1 c_2 c_3 c_4 \frac{\Delta(\widehat{K})^m}{\rho(\widehat{K})^r}, \\[1.0ex]
    \tilde{C}(\mathcal{T}_h) &= \sqrt{q} \sqrt{q} \left(\frac{\Delta(K)}{\rho(K)} \right)^m = q \left(\frac{\Delta(K)}{\rho(K)} \right)^m.
\end{align}
It remains for us to identify $c_1$, $\ldots$, $c_4$ which appear in the definition of $C_{\text{int}}$ above. With modest effort, we can show that:

\begin{itemize}
    \item $c_1$ is upper bounded by the Lebesgue constant.
    \item $c_2$ is the Bramble-Hilbert constant.
    \item $c_3$ is a norm-scaling constant associated with Eq.~(11.7b) in~\cite{ern2021finiteI}.
    \item $c_4$ is a norm-scaling constant associated with Eq.~(11.7a) in~\cite{ern2021finiteI}.
\end{itemize}
In principle, none of these constants depend on the characteristics of the mesh, $\mathcal{T}_h$. In what follows, we will discuss each of these constants in more detail, and offer explanations for their descriptions (above).

The constant $c_1$ is defined at the top of page 131 in~\cite{ern2021finiteI}, such that
\begin{align}
    c_1 \equiv \left\| \mathcal{G} \right\|_{\mathcal{L}(W^{r,p}(\widehat{K}; \mathbb{R}^q); W^{m,p}(\widehat{K}; \mathbb{R}^q)) } \equiv 
    \sup_{\widehat{v} \in W^{r,p}(\widehat{K};\mathbb{R}^q)} \frac{\left| \mathcal{G}(\widehat{v}) \right|_{W^{m,p}(\widehat{K}; \mathbb{R}^q)}}{\left\|\widehat{v} \right\|_{W^{r,p}(\widehat{K};\mathbb{R}^q)}},
    \label{c1_one}
\end{align}
where the operator $\mathcal{G}(\cdot)$ is given by
\begin{align}
    \mathcal{G}(\widehat{v}) = \widehat{v} - \mathcal{I}_{\widehat{K}}(\widehat{v}),
\end{align}
$\widehat{v} \in W^{r,p}(\widehat{K};\mathbb{R}^q)$, and $\mathcal{I}_{\widehat{K}}(\cdot)$ is the Lagrange interpolation operator in reference space. The definition of $c_1$ in Eq.~\eqref{c1_one} can be rewritten, and bounded above as follows
\begin{align}
    \nonumber c_1 &= \sup_{\widehat{v} \in W^{r,p}(\widehat{K};\mathbb{R}^q)} \frac{\left| \widehat{v} - \mathcal{I}_{\widehat{K}}(\widehat{v}) \right|_{W^{m,p}(\widehat{K}; \mathbb{R}^q)}}{\left\|\widehat{v} \right\|_{W^{r,p}(\widehat{K};\mathbb{R}^q)}} \\[1.0ex]
    &\leq \sup_{\widehat{v} \in W^{r,p}(\widehat{K};\mathbb{R}^q)} \frac{\left| \widehat{v} \right|_{W^{m,p}(\widehat{K}; \mathbb{R}^q)} +\left|  \mathcal{I}_{\widehat{K}}(\widehat{v}) \right|_{W^{m,p}(\widehat{K}; \mathbb{R}^q)}}{\left\|\widehat{v} \right\|_{W^{r,p}(\widehat{K};\mathbb{R}^q)}}, \label{c1_two}
\end{align}
in accordance with the Triangle inequality. Furthermore, since $r \geq m$, we have that
\begin{align}
\left\|\widehat{v} \right\|_{W^{m,p}(\widehat{K};\mathbb{R}^q)} \leq \left\|\widehat{v} \right\|_{W^{r,p}(\widehat{K};\mathbb{R}^q)}.
    \label{c1_three}
\end{align}
In addition, we have that
\begin{align}
\nonumber \left| \widehat{v} \right|_{W^{m,p}(\widehat{K}; \mathbb{R}^q)} &\leq \left\|\widehat{v} \right\|_{W^{m,p}(\widehat{K};\mathbb{R}^q)}, \\[1.0ex]
    \left|  \mathcal{I}_{\widehat{K}}(\widehat{v}) \right|_{W^{m,p}(\widehat{K}; \mathbb{R}^q)} &\leq \left\|  \mathcal{I}_{\widehat{K}}(\widehat{v}) \right\|_{W^{m,p}(\widehat{K}; \mathbb{R}^q)}.
    \label{c1_four}
\end{align}
Upon substituting Eqs.~\eqref{c1_three} and \eqref{c1_four} into Eq.~\eqref{c1_two}, one obtains
\begin{align}
    \nonumber c_1 &\leq \sup_{\widehat{v} \in W^{r,p}(\widehat{K};\mathbb{R}^q)} \frac{\left\| \widehat{v} \right\|_{W^{m,p}(\widehat{K}; \mathbb{R}^q)} +\left\|  \mathcal{I}_{\widehat{K}}(\widehat{v}) \right\|_{W^{m,p}(\widehat{K}; \mathbb{R}^q)}}{\left\|\widehat{v} \right\|_{W^{m,p}(\widehat{K};\mathbb{R}^q)}} \\[1.0ex]
    & = 1 + \sup_{\widehat{v} \in W^{r,p}(\widehat{K};\mathbb{R}^q)} \frac{\left\|  \mathcal{I}_{\widehat{K}}(\widehat{v}) \right\|_{W^{m,p}(\widehat{K}; \mathbb{R}^q)}}{\left\|\widehat{v} \right\|_{W^{m,p}(\widehat{K};\mathbb{R}^q)}}.
    \label{c1_five}
\end{align}
Evidently, we can rewrite the inequality in Eq.~\eqref{c1_five} as
\begin{align}
    c_1 \leq 1 + C_{\text{Lebesgue}},
\end{align}
where 
\begin{align}
    C_{\text{Lebesgue}} \equiv \sup_{\widehat{v} \in W^{r,p}(\widehat{K};\mathbb{R}^q)} \frac{\left\|  \mathcal{I}_{\widehat{K}}(\widehat{v}) \right\|_{W^{m,p}(\widehat{K}; \mathbb{R}^q)}}{\left\|\widehat{v} \right\|_{W^{m,p}(\widehat{K};\mathbb{R}^q)}},
\end{align}
is a Lebesgue constant in reference space---see the relevant discussion in Section 5.5 of~\cite{ern2021finiteI}. The Lebesgue constant is a well-known quantity, especially in the case when $r = m$. When $m$, $p$, and $q$ are fixed, the value of the Lebesgue constant generally increases with the polynomial order and the number of spatial dimensions. Fortunately, the value of the constant can be controlled by carefully choosing the locations of the interpolation points. Broadly speaking, equally-spaced points produce Lebesgue constants which grow exponentially with the degree of the interpolating polynomial~\cite{hesthaven2007nodal}. Conversely, non-uniformly distributed points demonstrate much better growth rates. Examples of this are quite numerous, and the interested reader is encouraged to consult~\cite{warburton2006explicit,williams2013nodal,isaac2020recursive,gobel2025explicit,jimenez2025approximating} and the references therein for details. In summary, we find that
\begin{align}
    c_1 \leq C_{\text{Lebesgue}}(k, d, r, m, p, q).
\end{align}
Here, the polynomial order $k$ and dimension $d$ are the primary drivers of $C_{\text{Lebesgue}}$ in practice.

The constant $c_2$ is the Bramble-Hilbert constant of the well-known Bramble-Hilbert Lemma~\cite{bramble1971bounds}. The original formulation of the lemma is non-constructive, and no explicit formula for the constant is given. In later work by Dupont and Scott~\cite{dupont1980polynomial}, a constructive proof is given, and a formula for the constant is also provided for star-shaped domains. This formula for the constant depends on $r, p$, and the \emph{chunkiness} parameter of the domain in question, where the chunkiness parameter is essentially an aspect ratio or inverse thickness of the domain. This may initially appear to be a serious problem, especially if the domain in question is the physical element $K$, as $c_2$ would be dependent on the mesh $\mathcal{T}_h$. Fortunately, in accordance with~\cite{ern2021finiteI}, one may choose the domain to be the reference element $\widehat{K}$. With this in mind, the chunkiness parameter is
\begin{align}
    C_{\text{chunk}}(\widehat{K}) = \frac{\Delta(\widehat{K})}{r(\widehat{K})},
\end{align}
where we define $r(\widehat{K})$ as the radius of the maximal ball inscribed within $\widehat{K}$. Furthermore, in accordance with~\cite{dupont1980polynomial} and Lemma~11.9 of~\cite{ern2021finiteI}, we find that
\begin{align}
    c_2 = c_2 \left(r, p, C_{\text{chunk}}(\widehat{K}) \right).
\end{align}
Therefore, $c_2$ is independent of the mesh.

Lastly, the constants $c_3$ and $c_4$ are functions of the associated Sobolev norms, and the  space dimension. In particular, 
\begin{align}
    c_3 = c_3(m,d), \qquad c_4 = c_4(r,d).
\end{align}
This is discussed in the proof of Lemma 11.7 in~\cite{ern2021finiteI}.

% An appendix contains supplementary information that is not an essential part of the text itself but which may be helpful in providing a more comprehensive understanding of the research problem or it is information that is too cumbersome to be included in the body of the paper.

%%=============================================%%
%% For submissions to Nature Portfolio Journals %%
%% please use the heading ``Extended Data''.   %%
%%=============================================%%

%%=============================================================%%
%% Sample for another appendix section			       %%
%%=============================================================%%

%% \section{Example of another appendix section}\label{secA2}%
%% Appendices may be used for helpful, supporting or essential material that would otherwise 
%% clutter, break up or be distracting to the text. Appendices can consist of sections, figures, 
%% tables and equations etc.

\end{appendices}

%%===========================================================================================%%
%% If you are submitting to one of the Nature Portfolio journals, using the eJP submission   %%
%% system, please include the references within the manuscript file itself. You may do this  %%
%% by copying the reference list from your .bbl file, paste it into the main manuscript .tex %%
%% file, and delete the associated \verb+\bibliography+ commands.                            %%
%%===========================================================================================%%

\bibliography{references}% common bib file
%% if required, the content of .bbl file can be included here once bbl is generated
%%\input sn-article.bbl

\end{document}